\theoremstyle{plain}
\numberwithin{equation}{section}
\newtheorem{thm}[equation]{Theorem}
\newtheorem{lem}[equation]{Lemma}
\newtheorem{prop}[equation]{Proposition}
\newtheorem{cor}[equation]{Corollary}
\newtheorem{defn}[equation]{Definition}
\newcommand{\bN}{\mathbb{N}}
\newcommand{\bR}{\mathbb{R}}
\newcommand{\bX}{\mathbb{X}}
\newcommand{\bY}{\mathbb{Y}}
\newcommand{\bZ}{\mathbb{Z}}
\newcommand{\cA}{\mathcal{A}}
\newcommand{\cB}{\mathcal{B}}
\newcommand{\cC}{\mathcal{C}}
\newcommand{\cF}{\mathcal{F}}
\newcommand{\cI}{\mathcal{I}}
\newcommand{\cJ}{\mathcal{J}}
\newcommand{\cK}{\mathcal{K}}
\newcommand{\cS}{\mathcal{S}}
\newcommand{\cU}{\mathcal{U}}
\newcommand{\fX}{\mathfrak{X}}
\newcommand{\fY}{\mathfrak{Y}}
\newcommand{\fZ}{\mathfrak{Z}}
\newcommand{\colim}{\operatorname{colim}}
\newcommand{\Ho}{\operatorname{Ho}}
\newcommand{\id}{\operatorname{id}}
\newcommand{\map}{\operatorname{map}}
\newcommand{\supp}{\operatorname{supp}}
\newcommand{\Ob}{\mathfrak{Ob}}
\newcommand{\dash}{^{\prime}}
\newcommand{\ttop}{\mbox{\textbf{Top}}}
\newcommand{\topstacks}{\textbf{TopStacks}}
\newcommand{\hq}{/\hspace{-1.2mm}/}
\newcommand{\sk}{\operatorname{sk}}
\newcommand{\st}{\operatorname{st}}
\newcommand{\inc}{\operatorname{\cJ}}
\begin{document}

\title{The homotopy type of a topological stack}

\author{
Johannes Ebert,\\
Mathematisches Institut der Universit\"at Bonn\\
Beringstra{\ss}e 1\\
53115 Bonn, Germany\\
ebert@math.uni-bonn.de}

\maketitle

\begin{abstract}
The notion of the \emph{homotopy type} of a topological stack has
been around in the literature for some time. The basic idea is that
an atlas $X \to \fX$ of a stack determines a topological groupoid
$\bX$ with object space $X$. The homotopy type of $\fX$ should be
the classifying space $B \bX$. The choice of an atlas is not part of
the data of a stack and hence it is not immediately clear why this
construction of a homotopy type is well-defined, let alone
functorial. The purpose of this note is to give an elementary
construction of such a homotopy-type functor.
\end{abstract}

\section{Introduction}

The concept of a stack (which originated in algebraic geometry)
plays an increasingly important role in geometric topology, see for
example \cite{BGNX}, \cite{FHT}, \cite{EG}, \cite{EG2}. In this note
we show how a stack defines an object of homotopy theory.

We assume that the reader is familiar with the terminology of
stacks. Therefore we will not spell out the basic definitions here.
A stack over the site $\ttop$ of topological spaces is a lax sheaf
of groupoids on the site of topological spaces; we refer the reader
to the excellent \cite{Hein} an explanation of this definition. Stacks over $\ttop$ form a $2$-category all of whose $2$-morphisms are isomorphisms.
There are several possible notions of \emph{topological stacks}. Our
notion is made explicit in \ref{deftopstack}. Essentially, a
topological stack is a stack over $\ttop$ which can be represented
by a topological groupoid. Let $\topstacks$ denote the $2$-category
of topological stacks. For any $2$-category $ \cS$, we denote the
underlying ordinary category by the symbol $\tau_{\leq 1} \cS$.

We would like to construct a functor $\Ho: \tau_{\leq 1} \topstacks
\to \ttop$ which assigns to a stack its homotopy type. For
set-theoretical reasons, we need to restrict to small subcategories
of $\cS \subset \topstacks$. This level of sophistication is
certainly sufficient for all applications of our construction to
concrete mathematical problems.

Furthermore, it turns out that we need to restrict to stacks which
admit a presentation by a ``paracompact groupoid'' (see below for
details). This is a rather mild condition, which is satisfied by
virtually all stacks of interest in geometric topology. In the sequel, we
assume that $\cS$ is a small $2$-category and there is a fixed
$2$-functor $\inc:\cS \to  \topstacks$ such that all stacks in the image
of this functor admit a presentation by a paracompact groupoid.

The first main result of this note is

\begin{thm}\label{mainthm2}
There exists a functor $\Ho : \tau_{\leq 1} \cS \to \ttop$, which
assigns to $s \in \Ob (\cS)$ a topological space $\Ho(\fX)$ which is
homotopy equivalent to $B \bX$, when $ \bX$ is a groupoid presenting the topological 
$\fX=  \cJ (s)$. If $f,g$ are two $1$-morphisms with the same source and
target, then $\Ho(f)$ and $\Ho(g)$ are homotopic if $f$ and $g$ are
$2$-isomorphic or if $\inc (f)$ and $\inc (g)$ are concordant (see
Definition \ref{defconcord} below).
\end{thm}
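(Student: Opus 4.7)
The plan is to construct $\Ho$ by rigidifying the standard zigzag of classifying-space maps induced by pullbacks of atlases. Using the smallness of $\cS$, I would fix once and for all, for each $s \in \Ob(\cS)$, a paracompact groupoid $\bX_s$ presenting $\fX_s := \inc(s)$, together with its atlas $X_s \to \fX_s$. Setting $\Ho(s) := B\bX_s$ yields a candidate space with the correct homotopy type on objects; the content of the theorem lies entirely in the assignment on morphisms.

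For a $1$-morphism $f: \fX_s \to \fX_t$, one does not get a strict groupoid homomorphism $\bX_s \to \bX_t$, but the pullback space $X_s \times_{\fX_t} X_t$ carries a canonical ``bridge'' groupoid $\bZ_f$ presenting $\fX_s$, equipped with a Morita equivalence $\bZ_f \to \bX_s$ and a strict morphism $\bZ_f \to \bX_t$. Paracompactness is exactly what ensures that $B\bZ_f \to B\bX_s$ is a genuine homotopy equivalence rather than merely a weak equivalence, so one obtains a zigzag $B\bX_s \xleftarrow{\sim} B\bZ_f \to B\bX_t$ and hence a well-defined homotopy class $\Ho(f) : \Ho(s) \to \Ho(t)$. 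The main obstacle is to promote this to a strict functor. I expect the cleanest route is to enlarge each $\Ho(s)$ to a mapping telescope (or homotopy colimit) over the system of all refinements of $\bX_s$ arising as bridge groupoids of morphisms with source $\fX_s$; in that model, the backward equivalence in each zigzag acquires a canonical section, and the composite $g \circ f$ can be compared with the iterated pullback $X_s \times_{\fX_t} X_t \times_{\fX_u} X_u$ via a canonical map. Checking strict associativity then reduces to a coherence statement about pullback squares in $\topstacks$, and is the most delicate point in the argument.

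For the final clause, a $2$-isomorphism $\alpha : f \Rightarrow g$ lifts to an isomorphism of bridge groupoids $\bZ_f \cong \bZ_g$ compatible with both legs of their zigzags, and applying $B$ produces the required homotopy $\Ho(f) \simeq \Ho(g)$. For concordance, observe that $\fX_s \times [0,1]$ is presented by $\bX_s \times [0,1]$ and that $B(\bX_s \times [0,1]) \cong B\bX_s \times [0,1]$; thus a concordance between $\inc(f)$ and $\inc(g)$ yields, via the same pullback-zigzag construction applied to the morphism $\fX_s \times [0,1] \to \fX_t$, a map $\Ho(s) \times [0,1] \to \Ho(t)$ restricting at the endpoints to $\Ho(f)$ and $\Ho(g)$, which is precisely the desired homotopy.
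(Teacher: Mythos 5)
Your outline on objects and on the two homotopy clauses is consistent with the paper, but the core of the theorem --- producing a \emph{strict} functor $\Ho$ on $\tau_{\leq 1}\cS$ --- is exactly the point your proposal leaves open. Your zigzag $B\bX_s \leftarrow B\bZ_f \to B\bX_t$ through the bridge groupoid only defines $\Ho(f)$ after choosing a homotopy inverse of the backward leg, and the composite of two such zigzags is related to the zigzag of $g\circ f$ only up to canonical $2$-isomorphism of pullbacks, hence only up to homotopy after realization. You acknowledge this (``the most delicate point'') and propose a mapping telescope over ``all refinements,'' but you do not construct it, and it is not clear it can work as stated: the indexing system of refinements depends on the morphisms out of $s$, the backward equivalences acquire sections only after further choices, and strict associativity of iterated pullbacks in a $2$-category simply fails. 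This is not a routine verification; it is the theorem. The paper avoids the problem entirely by a different mechanism: for each $1$-morphism $f:\fX\to\fY$ it takes the \emph{whole weakly contractible space} $E_f$ of bundle maps from $\eta_{\fX}^*f^*U_{\bY}$ to the universal bundle $E\bY$ (weak contractibility being Theorem \ref{classprinbun}, which needs paracompactness of $B\bX$, i.e.\ Proposition \ref{paracompactness}); these spaces compose strictly associatively, giving a topological category $\widetilde{\tau_{\leq 1}\cS}$ and a continuous functor $\widetilde{\Ho}$ (Theorem \ref{mainthm1}), which is then strictified by the rectification theorem of Wahl (following Dwyer--Kan--Segal). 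Some such rectification input, or an explicit cofibrant replacement of the indexing category, is unavoidable, and your proposal contains no substitute for it.

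A secondary gap: you assert that paracompactness makes $B\bZ_f\to B\bX_s$ a genuine homotopy equivalence, but the hypothesis is only that each $\fX_s$ admits \emph{some} paracompact presentation. The bridge groupoid $\bZ_f$, with object space $X_s\times_{\fX_t}X_t$, is a new groupoid whose nerve spaces need not be paracompact (the paper explicitly warns that paracompactness of $\bX_0$ and $\bX_1$ does not give paracompactness of the nerve, and that products of paracompact spaces need not be paracompact). Without that, Theorem \ref{classprinbun} cannot be applied over $B\bZ_f$ to invert the backward leg. The paper's argument only ever classifies bundles over the spaces $B\bX$ for the \emph{chosen} paracompact presentations, which is why it needs no paracompactness of any auxiliary pullback groupoid.
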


Let $\pi_0 (\cS)$ be the category which is obtained from $\tau_{\leq
1} \cS$ by identification of $2$-isomorphic $1$-morphisms; there is
a quotient functor $\tau_{\leq 1} \cS \to \pi_0 (\cS)$. Note that
the fully faithful Yoneda embedding $\st:\ttop \to \topstacks$
defines a fully faithful functor $\ttop \to \pi_0 \topstacks$ -
homotopic but different maps of spaces do not yield $2$-isomorphic
morphisms of stacks. Furthermore, let $\Ho \ttop$ be the homotopy
category of topological spaces. As a corollary of Theorem
\ref{mainthm2}, we obtain the existence of a homotopy type functor
$\pi_0 (\cS) \to \Ho \ttop$. This homotopy type functor extends both, the obvious functor $\ttop \to \Ho \ttop$ and the functor from topological groups to $\Ho \ttop$ sending $G $ to $BG$.

There is an essential feature of homotopy types which is abandoned
in Theorem \ref{mainthm2}. Let us describe what is missing. Let $X$
be a space. Then we denote, as usual, the stack $\st(X)$ by the
symbol $X$; there is no danger of confusion. The space
$\Ho(\fX)$ should come with a map $\eta_{\fX}:\Ho(\fX) \to \fX$
which should be a \emph{universal weak equivalence}, i.e. for any
space $Y$ and any $Y \to \fX$, the pullback $Y \times_{\fX} \Ho(\fX)
\to Y$ is a weak homotopy equivalence (the morphism $Y \to \fX$ is
automatically representable by \cite{Noo1}, Corollary 7.3; thus $Y
\times_{\fX} \Ho(\fX)$ is a topological space). The map $\eta_{\fX}$ is a generalization of the map $BG \to \ast \hq G$ given by the universal principal $G$-bundle.

Obviously, it is desirable that the maps $\eta_{\fX}$ assemble to a
natural transformation $\eta: \st \circ \Ho \to \tau_{\leq 1} \inc$
of functors $\tau_{\leq 1} \cS \to \tau_{\leq 1} \topstacks$. We were not able to
construct such a natural transformation on the nose, but only
\emph{up to contractible choice} and up to $2$-isomorphism. The following two definition make these notions precise.

\begin{defn}
Let $\cC$ be a (discrete, small) category. A \emph{functor defined
up to contractible choice} is a triple $(\widetilde{\cC},p,F)$,
where $\widetilde{\cC}$ is a topological category which has the same
objects as $\cC$ , $p: \widetilde{\cC} \to \cC$ is a functor which
is the identity on objects and a weak homotopy equivalence on
morphism spaces (i.e. $\widetilde{\cC}$ is a thickening of $\cC$ in
the sense that the morphisms in $\cC$ are replaced by contractible spaces of
morphisms) and $F: \widetilde{C} \to \ttop$ is a continuous functor.\\
\end{defn}

\begin{defn}
Let $\cA$ be a topological category with discrete object set and
$\cB$ be a discrete $2$-category all of whose $2$-morphisms are
isomorphisms. Let $F_0, F_1: \cA \to \cB$ be two functors. A
\emph{pseudo-natural transformation} $\eta$ assigns to every object
$a \in \cA$ a $1$-morphism $\eta_a:F_0 (a) \to F_1 (a)$ and to every
morphism $f:a \to a \dash$ of $\cA$ a $2$-isomorphism $\eta_f : F_1
(f) \circ \eta_a \to \eta_{a \dash} \circ F_0 (f)$ such that
$\eta_{\id_a}=\id_{\eta_a}$ and such that for any pair $f,f \dash$
of composable morphisms, the $2$-isomorphisms $\eta_f, \eta_{f
\dash} $ and $\eta_{f \dash \circ f}$ are compatible.
\end{defn}

For most (but not all) constructions of homotopy theory, a functor
defined up to contractible choice is as good as an honest functor.
Therefore the following theorem should be a satisfactory result for
many purposes.

\begin{thm}\label{mainthm1}
Let $\cS$ be as above. Then there exists a functor defined up to
contractible choice $(\widetilde{\tau_{\leq 1} \cS}, p,
\widetilde{\Ho})$. The functor $\widetilde{\Ho}$ is related to the
functor $\Ho \circ p$ from Theorem \ref{mainthm2} by a zig-zag of
natural transformations which are weak homotopy
equivalences on each object.

Moreover, there exists a pseudo-natural transformation $\eta: \st
\circ \widetilde{\Ho} \to \inc \circ p$ of functors
$\widetilde{\tau_{\leq 1} \cS}\to  \topstacks$. For any
stack $\fX \in \cS$, the morphism $\eta_{\fX}: \widetilde{\Ho} (\fX)
\to \fX$ is a \emph{universal weak equivalence}, in the sense that
for any space $Y$ and any $Y \to \fX$, the pullback $Y \times_{\fX}
\Ho(\fX) \to Y$ is a weak homotopy equivalence.
\end{thm}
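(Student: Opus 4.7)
My plan is to construct $\widetilde{\Ho}$, the thickening $\widetilde{\tau_{\leq 1}\cS}$, and the pseudo-natural transformation $\eta$ simultaneously, by fixing a groupoid presentation of each stack once and for all and using the universal weak equivalence property of $\eta_{\fX}$ to rigidify $1$-morphisms into continuous data parametrized by a contractible space. Concretely, for each $\fX \in \Ob(\cS)$ I fix a paracompact topological groupoid $\bX_{\fX}$ presenting $\inc(\fX)$, and set $\widetilde{\Ho}(\fX) := B\bX_{\fX}$, the classifying space obtained from a fixed functorial fat realization of the nerve. The construction comes equipped with a canonical $1$-morphism $\eta_{\fX} : B\bX_{\fX} \to \inc(\fX)$, which by a result developed earlier in the paper---the topological-stack analogue of the classical fact that numerable principal bundles are classified by maps to $BG$---is a universal weak equivalence under the standing paracompactness hypothesis. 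This settles the final assertion of the theorem.

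For morphisms I declare a $1$-morphism from $\fX$ to $\fY$ in $\widetilde{\tau_{\leq 1}\cS}$ to be a triple $(f,g,\alpha)$, where $f \in \tau_{\leq 1}\cS(\fX,\fY)$, $g : B\bX_{\fX} \to B\bY_{\fY}$ is a continuous map, and $\alpha$ is a $2$-isomorphism in $\topstacks$ between $\inc(f) \circ \eta_{\fX}$ and $\eta_{\fY} \circ \st(g)$. Composition is component-wise on $(f,g)$ and by pasting on the $\alpha$'s, and $p$ is the projection to $f$. Setting $\widetilde{\Ho}(f,g,\alpha) := g$ gives a continuous functor $\widetilde{\Ho} : \widetilde{\tau_{\leq 1}\cS} \to \ttop$, and setting $\eta_{(f,g,\alpha)} := \alpha$ directly produces a pseudo-natural transformation $\st \circ \widetilde{\Ho} \to \inc \circ p$; the coherence condition reduces by construction to the associativity of vertical pasting of $2$-isomorphisms in $\topstacks$.

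The key verification is that $p$ is a weak equivalence on each morphism space. For fixed $f$ the fibre is the space of pairs $(g,\alpha)$, which one identifies up to homotopy with the space of sections of the pullback map $B\bX_{\fX} \times_{\inc(\fY)} B\bY_{\fY} \to B\bX_{\fX}$; the universal weak equivalence property of $\eta_{\fY}$ forces this pullback map to be a weak equivalence, and---with suitable CW or numerable-cover models in place---its space of sections is contractible. For the comparison with Theorem \ref{mainthm2}, both $\Ho \circ p$ and $\widetilde{\Ho}$ assign to $\fX$ the classifying space of some paracompact presentation of $\inc(\fX)$; any two such presentations admit a common Morita refinement, and fat realization sends Morita equivalences of paracompact groupoids to weak equivalences, yielding the required zig-zag of natural transformations.

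The main obstacle is this fibre-contractibility statement: a weak equivalence of spaces need not have a contractible section space without further cofibrancy or fibrancy hypotheses, and so the argument requires cofibrant/fibrant models specifically adapted to the universal weak equivalence $\eta_{\fY}$. This is the step in which the paracompactness of the presenting groupoids is used in an essential way rather than as a convenience. A secondary point is to topologize the space of triples $(f,g,\alpha)$ so that composition is continuous; since $\tau_{\leq 1}\cS(\fX,\fY)$ is discrete and $\cS$ is small, this reduces to a standard argument in a convenient category of topological spaces.
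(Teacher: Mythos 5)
Your construction is, up to translation, the paper's: your pairs $(g,\alpha)$, with $\alpha$ a $2$-isomorphism $\inc(f)\circ\eta_{\fX}\Rightarrow\eta_{\fY}\circ\st(g)$, are exactly the points of the paper's space $E_f$ of principal $\bY$-bundle morphisms from $\eta_{\fX}^{*}f^{*}U_{\bY}$ to the universal bundle $E\bY\to B\bY$ (a $2$-isomorphism of maps into $\fY\simeq\bY_0\hq\bY_1$ is an isomorphism of the corresponding bundles, and a map $g$ together with an isomorphism onto $g^{*}E\bY$ is a bundle map). The genuine gap is precisely the step you flag as ``the main obstacle'': the weak contractibility of the fibre of $p$ over a fixed $f$. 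Your proposed route --- deduce it from the fact that $B\bX\times_{\fY}B\bY\to B\bX$ is a weak equivalence, with ``suitable CW or numerable-cover models'' --- does not work (a weak equivalence can have an empty section space, so no cofibrancy/fibrancy adjustment of the ambient spaces will produce contractibility from the weak-equivalence property alone), and it also inverts the paper's logic: in the paper the universal weak equivalence statement (Proposition \ref{unweakequ}) is itself \emph{deduced} from section-space contractibility via Lemma \ref{helplemma}, not the other way around.

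The missing ingredient is Theorem \ref{classprinbun}, which the paper applies directly: for any principal $\bY$-bundle over a \emph{paracompact} base, the space of bundle maps to $E\bY$ is weakly contractible. This is proved not by model-category manipulations but by a concrete patching argument --- the section space of $E\bY\to\bY_0$ is contractible (Proposition \ref{lemma1}), so bundle maps exist and are unique up to contractible choice locally, and a partition of unity glues compatible simplices of local choices into a global bundle map. The role of paracompactness is exactly here, and the point of Proposition \ref{paracompactness} (the appendix) is to guarantee that $B\bX$ is paracompact so that the theorem applies to $E_f$. Once you replace your fibre-contractibility argument by this citation, the rest of your proof goes through essentially as in the paper. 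Two smaller remarks: your Morita-refinement argument for the zig-zag with $\Ho\circ p$ addresses the wrong issue --- in the paper $\Ho$ is \emph{defined} as $p_{*}\widetilde{\Ho}$ via Wahl's rectification proposition, which supplies the zig-zag for the same choice of presentations; and the strict associativity of your composition (pasting of $\alpha$'s) is cleanest to check in the bundle-map picture, where it is literal composition of bundle morphisms.
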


The results of the present paper are very similar to those of
Behrang Noohi's recent paper \cite{Noo2} (in fact, they are slightly
weaker). Proposition 11.2 in \cite{Noo2} implies \ref{mainthm1}. On
the other hand our treatment is more elementary. Therefore we claim
that the present paper should be useful for anyone whose main
interest is in the \emph{applications} of topological stacks to
problems in geometry.

However, there is a flaw in the theory (also in \cite{Noo2}) which
we will describe now. Given a homotopy invariant functor $F$ from
spaces to groups (or any other discrete category), we can extend $F$
to stacks via

\begin{equation}\label{extensionoffunctors}
\hat{F}(\fX):= F(\Ho(\fX)).
\end{equation}

It follows that $\hat{F}(\eta_{\fX})$ is an isomorphism. For many
of the functors of algebraic topology, including singular (co)homology or
homotopy groups, this is a reasonable definition. But there are
important homotopy-invariant functors on spaces for which
\ref{extensionoffunctors} is \emph{not} a good definition. The prime
example is complex $K$-theory. Any reasonable definition of the
$K$-theory of a stack should satisfy $K^0 (\ast \hq G) \cong RG$ for
a compact Lie group $G$ ($RG$ is the representation ring). In fact
the definition of the $K$-theory of a stack given in \cite{FHT}
satisfies this condition. On the other hand, the celebrated
Atiyah-Segal completion theorem \cite{AS} shows, among other things,
that the natural map $RG \to K^0 (BG)$ is not an isomorphism. It
follows that, in the present setup, $K$-theory of a stack is not
homotopy-invariant. Gepner and Henriques \cite{GH} developed a finer
homotopy theory of stacks in which $K$-theory is homotopy invariant.
On the other hand, their theory is much more involved than ours.

Here is a brief outline of the paper. In section
\ref{groupoidsandstacks}, we discuss the notion of a
principal $\bX$-bundle for an arbitrary topological groupoid $\bX$ and
define the stack $\bX_0 \hq \bX_1$ of principal $\bX$-bundles, which is
the prototype of a topological stack. Then we construct the
universal principal $\bX$-bundle. The material in this section is a
rather straightforward generalization of the classical theory of
classifying spaces for topological groups. However, we need to be
precise on the point-set level, and Theorem
\ref{classprinbun} is stronger than what is standard in the theory
of fibre bundles. Section \ref{proofs} contains the proofs of
Theorem \ref{mainthm2} and \ref{mainthm1}. In an appendix
\ref{anhang}, we show a technical result which guarantees the
paracompactness of the classifying space of a ``paracompact
groupoid''.

\subsection*{Acknowledgements}

The author was supported by a postdoctoral grant from the German
Academic Exchange Service (DAAD). He enjoyed the hospitality of the
Mathematical Institute of the University of Oxford. He is
particularly grateful to Jeff Giansiracusa for numerous discussions
about stacks. He also wants to thank the anonymous referee who
pointed out a mistake at a crucial step of the argument in an
earlier version of this paper.

\section{Groupoids and stacks}\label{groupoidsandstacks}

\subsection{Principal bundles for groupoids}

Let $\bX= (\bX_0, \bX_1, s,t,e,m, \iota)$ be a topological groupoid:
$\bX_0$ is the object space, $\bX_1$ the morphism space, $s,t: \bX_1
\to \bX_0$ are source and target maps, $m: \bX_1 \times_{\bX_0}
\bX_1 \to \bX_1$ is the multiplication, $e: \bX_0 \to \bX_1$ is the
unit map and $\iota: \bX_1 \to \bX_1$ sends a morphism to its
inverse. The maps $s,t,e,m, \iota$ are continuous and satisfy the
usual identities. We use the following convention: the product
$xy=m(x,y)$ of $x,y \in \bX_1$ is defined if (and only if)
$t(x)=s(y)$.

We say that an $\bX$-space over a space $X$ consists of a space $E$,
two maps $p:E \to X$ and $q:E \to \bX_0$ and an ''action'' $\alpha:E
\times_{\bX_0} \bX_1 = \{ (e, \gamma) \in E \times \bX_1 | q(e) =
s(\gamma) \} \to E$ over $X$ which is compatible with the
projection to $X$ and the multiplication in $\bX$. In other words, the diagrams

\[
\xymatrix{E \times_{\bX_0} \bX_1 \ar[r]^{\alpha} \ar[d]^{p_1} & E \ar[d]^{p}\\
E \ar[r]^{p} & X
}
\]

($P_1$ is the projection onto the first factor) and

\[
\xymatrix{E \times_{\bX_0} \bX_1 \times_{\bX_0} \bX_1 \ar[r]^{(\id,m)} \ar[d]^-{(\alpha,\id)} & E \times_{\bX_0} \bX_1 \ar[d]^{\alpha}\\
E \times_{\bX_0} \bX_1 \ar[r]^{\alpha} & E
}
\]

are required to commute.
Isomorphisms, pullbacks and restrictions of $\bX$-spaces over $X$
are defined in the obvious way. We will often write an $\bX$-space
shortly as $p:E \to X$, with the maps $q$ and $\alpha$ understood.

To define principal $\bX$-bundles, we start with trivial bundles. The diagram

\begin{equation}\label{localmodel}
\xymatrix{
\bX_1 \ar[r]^{t} \ar[d]^{s} & \bX_0\\
\bX_0 & \\
}
\end{equation}

defines an $\bX$-space over $\bX_0$ ($p:=s$, $q:=t$, $\alpha:= m$).
This serves as the local model for a principal $\bX$-bundle.

\begin{defn}\label{defprinbun}
Let $\bX$ be a topological groupoid and let $X$ be a topological
space. A \emph{principal $\bX$-bundle} on $X$ is an $\bX$-space
$(E; p, q, \alpha)$ over $X$ which is locally trivial in the sense
that each $x \in X$ has an open neighborhood $U \subset X$ which
admits a map $h:U \to \bX_0$ and an isomorphism of the restriction
$E|_{U}$ with the pullback of \ref{localmodel} via $h$.
\end{defn}

There is an equivalent notion which is more abstract but also more
common in the theory of stacks. It is the notion of
\emph{$\bX$-torsors}. We will not use this notion, but we explain
it briefly. Given any map $T \to X$ of spaces, we can form the
groupoid $X_T$ with object space $T$ and morphism space $T \times_X
T$. We say that a morphism of topological groupoids $\bX \to \bY$ is
called \emph{cartesian} if the diagrams

\[
 \xymatrix{
\bX_1 \ar[r] \ar[d]^{s} & \bY_1  \ar[d]^{s} & & \bX_1 \ar[r] \ar[d]^{t} & \bY_1  \ar[d]^{t} \\
\bX_0 \ar[r] & \bY_0 & & \bX_0 \ar[r] & \bY_0 \\
}
\]

are cartesian. An \emph{$\bX$-torsor} on a space $X$ is a map $p:T \to
X$ which admits local sections together with a cartesian morphism of
groupoids $\phi:X_T \to \bX$.

Given such an $\bX$-torsor, a principal $\bX$-bundle is given as
follows: The total space $E$ is just $T$ and $\phi_0$ gives a map $E
\to \bX_0$. The action $\alpha$ is the composition $T \times_{\bX_0}
\bX_1 \cong T \times_X T \to T$ (projection onto the first factor).

Conversely, let $(E, p, q, \alpha)$ be a principal $\bX$-bundle.
Put $T := E$. The map $p$ has local sections by the definition of a
principal bundle\footnote{Note that $\iota: \bX_0 \to \bX_1$ is a
section to the map $s$.}. Let $e_1, e_2 \in E$ be two points in the
same fibre, $p(e_1) = p(e_2)$, i.e. $(e_1, e_2) \in E \times_X E$.
There exists a unique $\gamma_{(e_1,e_2)} \in \bX_1$ such that
$\alpha(e_1, \gamma_{(e_1,e_2)}) = e_2$. Assigning $(e_1, e_2)
\mapsto \gamma_{(e_1,e_2)}$ defines a map $\phi_1:E \times_X E \to
\bX_1$ which is continuous by the local triviality of a principal
bundle. This map fits into a commutative diagram

\[
 \xymatrix{
T \times_X T \ar[r]^{\phi} \ar[d]^{s,t} & \bX_{1}  \ar[d]^{s,t} \\
T \ar[r]^{q} & \bX_{0} ,
}
\]

which is easily seen to be cartesian. We will not use the concept of
a torsor any more in this note. One can also describe principal $\bX$-bundles in terms of transition functions, see
\cite{Haef}.

It is worth to spell out what a principal $\bX$-bundle is for
familiar groupoids. Let $Y$ be a space, considered as a groupoid
$\bX$ without nontrivial morphisms. Let $(E,p,q,\alpha)$ be a principal $\bX$-bundle on $X$. The maps in \ref{localmodel} are
identities; thus $p:E \to X$ is a homeomorphism; $q \circ p^{-1}$ is
a map $X \to Y$, and under these identifications $\alpha$ is the
canonical homeomorphism $X \times_{X} X \to X$. Thus a principal
bundle for the trivial groupoid is the same as a continuous map $X
\to Y$.

A similar situation is met when $\bX=Y_{\cU}$ is the groupoid
associated to an open cover $\cU$ of the space $Y$ (see \cite{seg}).
The map $E \to X$ is then an open cover of $X$, $q$ is a collection
of locally defined maps to $Y$ (in fact, to the elements of the
original open cover). The map $\alpha$ is precisely the information
that all these locally defined maps fit together to form a globally
defined map $X \to Y$.

If $G$ is a topological group, considered as a groupoid with one
object, then the notion of principal $G$-bundle from Definition
\ref{defprinbun} agrees with the traditional notion of a principal
bundle, with the exception that we assume that $G$ acts from the
\emph{left} on the total space.

Similarly, if $G$ acts on the space $Y$ from the left, we form the
groupoid $G \int Y$: the object space is $Y$, the morphism space is
$G \times Y$ and the action is used for the structural map of a
groupoid. A principal $G \int Y$-bundle on a space $X$ consists of a
principal $G$-bundle $P \to X$ and an $G$-equivariant map $q:P \to
Y$.

\subsection{The stack of principal $\bX$-bundles}\label{stackprinbun}

The collection of all principal $\bX$-bundles on a space $X$,
together with their isomorphisms, forms a groupoid which we denote by $\bX_0
\hq \bX_1 (X)$. The functor $X \mapsto \bX_0 \hq \bX_1 (X)$ is a
(lax) presheaf of groupoids on the site of topological spaces. The local
nature of principal bundles shows that this is actually a
\emph{sheaf} of groupoids, in other words, a stack, which we denote
by $\bX_0 \hq \bX_1$.

Clearly, morphisms of groupoids yield morphisms of stacks and
natural transformations of morphisms give $2$-morphisms of stacks.
Thus there is a $2$-functor from topological groupoids to stacks,
sending $\bX$ to $\bX_0 \hq \bX_1$. This $2$-functor is far from
being an equivalence of categories. There can be a morphism
$\phi:\bX \to \bY$ of groupoids such that the induced morphism
$\phi_*: \bX_0 \hq \bX_1 \to \bY_0 \hq \bY_1$ is an equivalence, but
$\phi$ does not have an inverse. This is not particularily exotic:
given a topological group $G$ and a principal $G$-bundle $P \to X$,
then the obvious groupoid morphism $G \int P \to X$ induces an
equivalence of stacks $P \hq G \cong X$, but there is no inverse
unless $P$ is trivial. A similar situation is met when $\cU$ is an
open cover of the space $X$ which defines a groupoid $X_{\cU}$ and
an equivalence $X_{\cU} \to X$. It is essential for the theory of
stacks to allow for inverses of these morphisms.

It may seem that the stacks $\bX_0 \hq \bX_1$ are quite special, but
this is not the case. The data of a stack $\fX$, a space $X$ and a
representable map $\varphi:X \to \fX$ determines a topological
groupoid $\bX$. Namely, $\bX_0 = X$, $\bX_1 = X \times_{\fX} X$; the
structure maps for a groupoid are easy to find and the proof of the
groupoid axioms is easy as well.

Moreover, the map $\varphi$ determines a morphism
$\hat{\varphi}:\bX_0 \hq \bX_1 \to \fX$ of stacks. It is easy to see
that $\varphi$ is a chart (in the sense of \cite{Noo1}, Def. 7.1) if
and only if $\hat{\varphi}$ is an equivalence of stacks.

\begin{defn}\label{deftopstack}
A stack $\fX$ over the site $\ttop$ is a topological stack if there
exists a topological groupoid $\bX$ and an equivalence of stacks
$\bX_0 \hq \bX_1 \to \fX$.
\end{defn}

This notion agrees with the notion of a ''pretopological stack''
defined in \cite{Noo1}, Def. 7.1.

\subsection{The universal principal bundle}

We now want to define the universal principal $\bX$-bundle. The
topological category $\bX \downarrow \bX$ is the category of arrows
in $\bX$; more precisely, an object is an arrow $\gamma:x \to y$ in
$\bX$ and a morphism from $(\gamma: x\to y)$ to $(\gamma \dash: x
\dash \to y)$ is a morphism $\delta:x \to x \dash$
with\footnote{Sic; remember our convention about multiplication in a
groupoid.} $ \delta  \gamma \dash := m(\delta,\gamma \dash) =
\gamma$; there is no morphism $(\gamma: x\to y) \to (\gamma \dash: x
\dash \to y\dash)$ if $y \neq y \dash$.

There is a topology on $\bX \downarrow \bX$ induced from the
topology on $\bX$ and the functor $p:\bX \downarrow \bX \to \bX$; $p
(\gamma: x \to y)=x$, $p(\delta) = \delta$ is continuous. Similarly,
the functor $\zeta:\bX \downarrow \bX \to \bX_0$ (the trivial
groupoid with object space $\bX_0$) which sends $\gamma: x \to y$ to
$y$ is continuous.

In this note we say that the classifying space of a topological
category $\cC$ is the \emph{thick} geometric realization $\|
N_{\bullet} \cC \|$ of the nerve $N_{\bullet} \cC$ of the category.
The thick realization $\|X_{\bullet}\|$ of a simplicial space
$X_{\bullet}$ is the space
\[
 \| X_{\bullet} \| := \coprod_{n \geq 0} X_n \times \Delta^n / \sim,
\]
where $\sim$ is the equivalence relation generated by $(\varphi^*
x,t) \sim (x, \varphi_* t)$ for any \emph{injective} map $\varphi$
in the simplex category $\triangle$.

We apply the classifying space construction to the functors $p$ and
$\zeta$ and obtain a diagram

\begin{equation}\label{univprinbun}
\xymatrix{B (\bX \downarrow \bX) \ar[r]^-{B \zeta} \ar[d]^{B p} & \bX_0\\
B \bX & \\
}
\end{equation}
and, furthermore, a map $B (\bX \downarrow \bX) \times_{\bX_0} \bX_1
\to B (\bX \downarrow \bX)$ which is given by multiplication. We
will abbreviate $E \bX := B (\bX \downarrow \bX)$. In the appendix,
we shall describe two other convenient models for the spaces $B \bX$
and $E \bX $.

\begin{prop}\label{isprinbun}
The diagram \ref{univprinbun} is a principal $\bX$-bundle.
\end{prop}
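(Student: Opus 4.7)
The plan is to proceed in two steps: first to establish that $E\bX$ is an $\bX$-space over $B\bX$, and then to verify local triviality. For the $\bX$-space structure, I would observe that the category $\bX \downarrow \bX$ carries a continuous right action of $\bX$ via the functor
\[
  \mu : (\bX \downarrow \bX) \times_{\bX_0} \bX \to \bX \downarrow \bX
\]
(fibre product over $\zeta$ and $s$) which sends an object $(\gamma : x \to y, \epsilon : y \to z)$ to $\gamma\epsilon : x \to z$, and a morphism $(\delta, \epsilon)$ with $\delta\gamma' = \gamma$ to the morphism $\delta : (\gamma\epsilon) \to (\gamma'\epsilon)$ in $\bX \downarrow \bX$ (well-defined since $\delta(\gamma'\epsilon) = (\delta\gamma')\epsilon = \gamma\epsilon$). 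Applying the thick classifying space functor then produces the continuous action $\alpha : E\bX \times_{\bX_0} \bX_1 \to E\bX$ that covers $Bp$, and the two compatibility squares of Definition \ref{defprinbun} reduce respectively to the identity $p(\gamma\epsilon) = p(\gamma)$ (invariance of $p$ under the action) and to associativity of composition in $\bX$.

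For local triviality, the key structural observation is a level-wise pullback description. An $n$-simplex of $N(\bX \downarrow \bX)$ is a chain $\gamma_0 \leftarrow \cdots \leftarrow \gamma_n$ of morphisms in $\bX \downarrow \bX$ whose objects all share a common target, and it is uniquely determined by its image $(\delta_1, \ldots, \delta_n) \in N_n\bX$ under $p$ together with the final arrow $\gamma_n$ (subject to $s(\gamma_n) = t(\delta_n)$, with $\gamma_i = \delta_{i+1}\cdots\delta_n\gamma_n$ recovered from the composition). This yields a natural homeomorphism
\[
  N_n(\bX \downarrow \bX) \;\cong\; N_n\bX \times_{t_n, \bX_0, s} \bX_1,
\]
where $t_n(\delta_1, \ldots, \delta_n) := t(\delta_n)$ and $t_0 := \id$; under this identification $N_np$ is projection onto the first factor and the $\bX$-action is right multiplication on the second. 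Thus at each simplicial level, $Np$ is a principal $\bX$-bundle pulled back along $t_n$ from $s : \bX_1 \to \bX_0$.

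To pass to a local trivialization of $Bp$ itself, I would use that in the thick realization each point of $B\bX$ has a unique non-degenerate representative $(\sigma, u) \in N_n\bX \times \mathrm{int}(\Delta^n)$. Around a chosen point, I would construct an open neighborhood $U \subset B\bX$ by starting with a small open product $V \times W \subset N_n\bX \times \mathrm{int}(\Delta^n)$ and enlarging it to its saturation under the face-map equivalence. Over such a $U$, the ``target of last vertex'' assembles into a continuous map $h : U \to \bX_0$, and the level-wise pullback isomorphism induces a trivialization $Bp^{-1}(U) \cong U \times_{h, \bX_0, s} \bX_1$ of $\bX$-spaces, as required.

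The principal obstacle is the construction and continuity verification of this open cover. The naive image $[V \times W] \subset B\bX$ is not open, since a point in $\mathrm{int}(\Delta^n)$ corresponds via face inclusions to boundary points of higher-dimensional simplices. One must therefore thicken $W$ to account for neighborhoods of all higher cells that face-map into it, use the compatibility of face maps with $\mu$ (morphisms in $\bX \downarrow \bX$ act trivially on the $\bX_1$ factor), and exploit the thick-realization convention of using only injective face maps—guaranteeing unique non-degenerate representatives—in order to patch the level-wise trivializations into continuous bundle data on $U$.
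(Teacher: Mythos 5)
Your first step (the $\bX$-space structure via the functor $\mu$, and the level-wise identification $N_n(\bX \downarrow \bX) \cong N_n\bX \times_{t_n, \bX_0, s} \bX_1$) is correct and consistent with how the paper sets up diagram \ref{univprinbun}. The gap is in the local triviality step, which is the actual content of the proposition, and the trivializing map you propose does not work. The function ``target of the last vertex'' is incompatible with the last face map of $N_{\bullet}(\bX \downarrow \bX)$: dropping the final arrow sends $((\delta_1,\dots,\delta_n),\gamma_n)$ to $((\delta_1,\dots,\delta_{n-1}),\delta_n\gamma_n)$, so $d_n$ changes both the base map (from $t(\delta_n)$ to $t(\delta_{n-1})$) and the $\bX_1$-coordinate; in particular your parenthetical claim that morphisms of $\bX \downarrow \bX$ act trivially on the $\bX_1$ factor fails for $d_n$. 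Concretely, if $b \in B\bX$ has minimal representative $((\delta_1,\dots,\delta_n),u)$ with $u \in \mathrm{int}(\Delta^n)$, then by the definition of the quotient topology on the thick realization \emph{every} open neighborhood of $b$ contains points whose minimal representative is $((\delta_1,\dots,\delta_n,\delta_{n+1}),u')$ for an arbitrary composable $\delta_{n+1}$ and $u'$ with small positive last barycentric coordinate; on such points ``target of the last vertex'' takes the value $t(\delta_{n+1})$, which is essentially arbitrary, whereas it equals $t(\delta_n)$ at $b$. So no amount of saturating $V \times W$ makes your $h$ continuous, and the level-wise trivializations cannot be patched as described.

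The standard fix is Milnor's: the charts must be indexed by a \emph{fixed} $i \in \bN$ (``source of the $i$-th join coordinate'', on the open set where the $i$-th join parameter is positive), not by the position ``last''. Such a fixed global index is not available on $\|N_{\bullet}\bX\|$ directly, because different representatives of the same point label their vertices differently; producing it is exactly the content of Segal's homeomorphisms $\|N_{\bullet}\bX\| \cong B^{Mil}\bX$ and $\|N_{\bullet}(\bX \downarrow \bX)\| \cong E^{Mil}\bX$ recorded in the appendix. This is why the paper does not argue on the thick realization at all: it transports the problem to the Milnor--Haefliger model, where the coordinates $t_i$ and $f_i$ are globally defined and local triviality over $\{t_i > 0\}$ is immediate, and it refers to Haefliger and Noohi for the details. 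To complete your argument you would either have to reprove that homeomorphism or switch to the Milnor model from the outset.
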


The proof is a straightforward generalization of Milnor's
construction of classifying spaces for topological groups. It is
given in \cite{Haef} and also in \cite{Noo2}. The proofs uses a
different description of the topological space $B \bX$. We say a few
words about the latter point in the appendix.

\begin{prop}\label{lemma1}
There exists a section $\sigma: \bX_0 \to E \bX$ of $B \zeta$ and a
deformation retraction of $E \bX $ onto $\sigma(\bX_0)$ over
$\bX_0$. Consequently, the space of sections of $B \zeta$ is
contractible.
\end{prop}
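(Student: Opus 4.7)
The plan is to exhibit the identity arrows $\id_y: y \to y$ as a "terminal subobject" of $\bX \downarrow \bX$ inside each fibre of $\zeta$, obtaining a retraction functor $\rho$ with $B\rho = \sigma \circ B\zeta$ together with a natural transformation $\id \Rightarrow \rho$; the induced homotopy on thick classifying spaces will be the required fibrewise deformation retraction.

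First, define $\sigma: \bX_0 \to E\bX$ as the composite $\bX_0 \xrightarrow{e} \bX_1 \hookrightarrow E\bX$, where $\bX_1$ is identified with the object space of $\bX \downarrow \bX$ sitting as the $0$-skeleton of $E\bX$. Explicitly $\sigma(y) = (\id_y: y \to y)$, and $B\zeta \circ \sigma = \id_{\bX_0}$ directly from the definition of $\zeta$. Next define a continuous retraction functor
\[
 \rho: \bX \downarrow \bX \longrightarrow \bX \downarrow \bX, \qquad \rho(\gamma: x\to y) := (\id_y: y \to y), \qquad \rho(\delta) := \id_{\id_y},
\]
so that $\rho = \sigma \circ \zeta$ as functors and hence $B\rho = \sigma \circ B\zeta$.

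Now construct a natural transformation $\eta: \id \Rightarrow \rho$ whose component at $\gamma: x\to y$ is $\eta_\gamma := \gamma$, regarded as a morphism $\gamma \to \id_y$ in $\bX \downarrow \bX$; this is legitimate because $m(\gamma,\id_y) = \gamma$, and it is continuous in $\gamma$. Naturality is immediate: for $\delta: \gamma \to \gamma'$ in $\bX \downarrow \bX$ (so $m(\delta, \gamma')=\gamma$), both sides of the naturality square reduce to the arrow $\gamma: \gamma \to \id_y$:
\[
 \rho(\delta) \circ \eta_\gamma = m(\gamma, \id_y) = \gamma = m(\delta, \gamma') = \eta_{\gamma'} \circ \delta.
\]
Equivalently, $\eta$ is a continuous functor $\bX \downarrow \bX \times [1] \to \bX \downarrow \bX$ out of the product with the interval category $[1] = \{0 \to 1\}$. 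By the standard procedure that converts natural transformations into homotopies on thick classifying spaces (apply $\|N_\bullet(-)\|$ to $\eta$ and precompose with the inclusion $E\bX \times \Delta^1 \hookrightarrow \|N_\bullet(\bX\downarrow\bX \times [1])\|$ that selects the non-degenerate $1$-simplex of $N_\bullet[1]$), this yields a continuous homotopy $H: E\bX \times I \to E\bX$ with $H(-,0) = \id_{E\bX}$ and $H(-,1) = B\rho = \sigma \circ B\zeta$. Because $\bX_0$ is a trivial groupoid, $\zeta(\eta_\gamma) = \id_y$, so $H$ stays inside fibres of $B\zeta$; this is the required deformation retraction of $E\bX$ onto $\sigma(\bX_0)$ over $\bX_0$.

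For the final assertion, let $\Gamma$ denote the space of sections of $B\zeta$. The formula $\widetilde H(s,t)(y) := H(s(y),t)$ defines a continuous map $\Gamma \times I \to \Gamma$ (fibrewiseness of $H$ ensures that $\widetilde H(s,t)$ remains a section), with $\widetilde H(-,0) = \id_\Gamma$ and $\widetilde H(s,1)(y) = \sigma(B\zeta(s(y))) = \sigma(y)$ for every $s$; hence $\Gamma$ contracts onto $\{\sigma\}$. The main technical point is the highlighted passage from the natural transformation $\eta$ to an honest continuous homotopy on the thick realization, i.e.~producing the map $E\bX \times \Delta^1 \to E\bX$ with the correct endpoints; once one identifies $\Delta^1$ with the realization of the non-degenerate $1$-cell inside $\|N_\bullet[1]\|$ and uses the natural shuffle-type map into the realization of a product nerve, all remaining verifications (endpoints, fibrewiseness, continuity) are mechanical.
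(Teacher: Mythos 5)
Your proof is correct and follows essentially the same route as the paper: the section $\sigma$ given by identity arrows, the natural transformation $\id \Rightarrow \sigma\circ\zeta$ with component $\gamma$ at the object $\gamma$, and the passage from a continuous natural transformation to a fibrewise homotopy on thick realizations (which the paper likewise flags as the only technical point, handled by citing Segal). Your extra details (explicit naturality check, fibrewiseness, and the induced contraction of the section space) are consistent with what the paper leaves implicit.
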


\begin{proof}
Let $\sigma: \bX_0 \to \bX \downarrow \bX$ be the functor which
sends $x$ to $\id: x \to x$. Clearly $\zeta \circ \sigma= \id$ and
there is an evident natural transformation $T:\id_{\bX \downarrow
\bX} \to \sigma \circ \zeta$. The composition $T \circ \sigma$ is
the identity transformation. So after realization, $\sigma$ defines
the desired section and $T$ defines the deformation retraction.

The well-known theorem that a continuous natural transformation
between two functors of topological categories defines a homotopy
between the maps on classifying spaces still holds if the geometric
realization of the nerve is replaced by the thick geometric
realization, except that we now get a contractible space of
preferred homotopies instead of just one (combine the standard proof
of the theorem in \cite{seg} with \cite{segcat}, Prop. A.1 (iii) and
an explicit computation of $\| N_{\bullet} (0 \to 1) \|$).
\end{proof}

\begin{thm}\label{classprinbun}
Let $E^{univ} \to B^{univ}$ be a principal $\bX$-bundle such that
the space of sections to the structure map $q:E^{univ} \to \bX_0$ is
contractible. Then for any principal $\bX$-bundle $E \to X$ on a
paracompact space $X$, the space of bundle morphisms $E \to
E^{univ}$ is weakly contractible.
\end{thm}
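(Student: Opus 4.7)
The plan is to identify bundle morphisms with sections of an associated fiber bundle over $X$, and then invoke a Dold-type section-extension theorem for locally trivial bundles over paracompact bases. First, I would form the balanced product
\[
\mathcal{E} := (E \times_{\bX_0} E^{univ}) / \bX,
\]
where $\bX$ acts diagonally on matched pairs via $(e,e') \cdot \gamma := (e\gamma, e'\gamma)$. The first projection descends to a continuous map $\pi\colon \mathcal{E} \to E/\bX = X$. Sending a bundle morphism $f\colon E \to E^{univ}$ to the section $x \mapsto [e, f(e)]$ (for any $e$ in the fiber of $E$ over $x$; well-defined on $\bX$-orbits by equivariance) yields a natural homeomorphism $\map^{\bX}(E, E^{univ}) \cong \Gamma(X, \mathcal{E})$, whose inverse is obtained by picking a representative of each section value and extending equivariantly using the free transitive $\bX$-action on each fiber of $E$.

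Next, I would exhibit $\pi$ as a locally trivial fiber bundle. If $U \subset X$ is open and $E|_U \cong U \times_{\bX_0, h, s} \bX_1$ for some $h\colon U \to \bX_0$, a direct computation collapses the diagonal action (each $\bX$-orbit in $E|_U \times_{\bX_0} E^{univ}$ has a unique representative of the form $((u, e(h(u))), e')$ with $q(e') = h(u)$), yielding $\mathcal{E}|_U \cong h^* E^{univ}$. Hence sections of $\mathcal{E}$ over $U$ correspond to lifts of $h$ along $q$, and the fiber of $\pi$ over $x \in U$ is $q^{-1}(h(x))$. Weak contractibility of $\Gamma(X, \mathcal{E})$ is equivalent to the extension property that every section of $\mathcal{E} \times K \to X \times K$ defined over $X \times L$ extends to $X \times K$ for any CW pair $(K, L)$ (specializing to $(K, L) = (D^{n+1}, S^n)$ kills $\pi_n$). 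Since $X \times K$ is paracompact, this is Dold's classical section-extension theorem for locally trivial bundles with (weakly) contractible fibers. The contractibility hypothesis on $\Gamma(\bX_0, E^{univ})$ is the ingredient that supplies fiber contractibility; in the main application $E^{univ} = E\bX$ of Proposition \ref{lemma1}, this is manifest from the fiberwise deformation retraction of $E\bX$ onto $\sigma(\bX_0)$.

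The main obstacle is precisely this translation from the stated hypothesis (contractibility of the \emph{global} section space of $q$) to the \emph{fiberwise} contractibility that feeds Dold's theorem. In the model of Proposition \ref{lemma1} the gap is closed painlessly by the fiberwise deformation retraction, but the bare hypothesis on $\Gamma(\bX_0, E^{univ})$ would require either an additional structural assumption or a more delicate global extension argument using partitions of unity in place of a fibration-theoretic reduction. Apart from this translation, the argument is a straightforward adaptation of Milnor's proof of the classification theorem for principal $G$-bundles, with the balanced product $\mathcal{E}$ playing the role of the associated fiber bundle $P \times_G EG$.
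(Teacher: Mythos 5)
Your overall strategy is sound and your reduction of the problem to a lifting problem is the same one the paper uses, but your globalization step is genuinely different from the paper's. The paper does not form the balanced product $\mathcal{E}$: it works directly with the spaces $\cF_S = \map_{\bX}(E|_{U_S}, E^{univ})$ of bundle maps over the finite intersections $U_S$ of a trivializing cover, chooses by induction on $|S|$ compatible maps $c_S \colon \Delta_S \to \cF_S$ (using weak contractibility of each $\cF_S$), and then glues these with a locally finite partition of unity via the explicit formula $c(y) = c_{S(p(y))}\bigl(\sum_{i} \lambda_i(p(y))\, i\bigr)(y)$; relative and parameterized versions of the same construction give connectivity and then weak contractibility of the space of bundle maps. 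This is more elementary than your appeal to Dold's section extension theorem, and it sidesteps one genuine imprecision in your write-up: $\mathcal{E} \to X$ is \emph{not} a locally trivial fibre bundle in the usual sense. Over a trivializing $U$ you correctly identify $\mathcal{E}|_U \cong h^* E^{univ}$, but this is a pullback of $q$ with varying fibres $q^{-1}(h(x))$, not a product; Dold's theorem needs the local pieces to have the section extension property, which does not follow from contractibility of the fibres alone and must be supplied by exactly the same input as the local step of the argument.

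On that local step: you are right that the hypothesis as stated (contractibility of the \emph{global} section space of $q$) does not formally yield contractibility of the space of lifts of an arbitrary $h \colon U \to \bX_0$ along $q$, which is what both your argument and the paper's need for the spaces $\cF_S$. The paper glosses over this with the phrase ``the same argument applies for a trivial principal bundle over a space different from $\bX_0$.'' In the intended application $E^{univ} = E\bX$ the issue disappears because Proposition \ref{lemma1} provides a deformation retraction of $E\bX$ onto $\sigma(\bX_0)$ \emph{over} $\bX_0$, i.e. a fibrewise retraction onto a section, which makes every lift space (and the local section extension property you would need for Dold) contractible. So your diagnosis of the weak point is accurate and applies equally to the paper's own proof; the clean repair is to read the hypothesis of the theorem as the existence of such a fibrewise deformation retraction, which is what Proposition \ref{lemma1} actually delivers.
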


In particular, this applies to the bundle $E \bX$ constructed above.

\begin{proof}
Let us begin with the trivial principal $\bX$-bundle $\bX_1 \to
\bX_0$ from \ref{localmodel}. It is easy to see that the space of
bundle morphisms from $\bX_1 \to \bX_0$ to $E^{univ} \to B^{univ}$
is homeomorphic to the space of sections $s: \bX_0 \to E^{univ}$ of
$q$. Thus, by assumption, the space of bundle morphisms is
contractible in this case. The same argument applies for a trivial
principal bundle over a space different from $\bX_0$.

To achieve the global statement, we apply a trick, which ought to be
standard in the theory of fibre bundles. Let $p:E \to X$ (plus the
additional data) be a principal $\bX$-bundle. Choose an open
covering $(U_{i})_{i \in I}$ of $X$, so that $E|_{U_i}$ is trivial.
For any finite nonempty $S \subset I$, let $U_S := \bigcap_{i \in S}
U_i$. Clearly, $E|_{U_S}$ is trivial as well. Let $\cF_S:=\map_{\bX}
(E|_{U_S}, E^{univ})$ be the space of bundle maps from $E|_{U_S}$ to
$ E^{univ}$. We have seen that $\cF_S $ is weakly contractible. For
any $T \subset S$, there is a restriction map $r_{S}^{T}: \cF_T \to
\cF_S$. Let $\Delta_S$ be the $| S| -1$-dimensional simplex $ \{
\sum_{i \in S} t_i i \in \bR^S | \sum_i t_i = 1; t_i \geq 0 \}$. We
now claim that we can choose maps
\[
c_S: \Delta_S \to \cF_S
\]
such that $r^{T}_{S} \circ c_T = c_S |_{\Delta_T}$ whenever $T
\subset S$. This is done by induction on $|S|$, using the
contractibility of $\cF_S$.

Finally, let $(\lambda_i)_{i \in I}$ be a locally finite partition
of unity subordinate to $(U_i)$. For any point $x \in X$, let $S(x)
\subset I$ be the (finite) set of all $ i \in I$ with $x \in \supp
(\lambda_i)$. The formula
\[
c (y)= c_{S(p(y))} ( \sum_{i \in S(p(y))} \lambda_i ( p(y)) i)  (y)
\]
defines a global bundle morphism. This shows that the space $\cF$ of
bundle morphisms is nonempty. A straightforward version of the
preceding reasoning shows a relative version of it: if $A \subset X$
is a cofibrant inclusion, then any bundle map $E|_{A} \to E^{univ}$
extends to $X$. Thus $\cF$ is connected. A parameterized version of
these arguments shows that for any compact $K$, $\map(K; \cF)$ is
nonempty and connected. Thus $\cF$ is weakly contractible.
\end{proof}

\begin{prop}\label{unweakequ}
The map $\nu_{\bX}:B \bX \to \bX_0 \hq \bX_1$ given by the principal
$\bX$-bundle \ref{univprinbun} is a universal weak equivalence.
\end{prop}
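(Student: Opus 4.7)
The plan is to verify the universal weak equivalence condition by showing that the projection $q : Y \times_{\fX} B\bX \to Y$, where $\fX := \bX_0 \hq \bX_1$, has the right lifting property against every CW-pair inclusion $S^{n-1} \hookrightarrow D^n$ ($n \geq 0$). Such a $q$ is an acyclic Serre fibration, hence in particular a weak homotopy equivalence.

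The first step is to unpack the $2$-pullback. A map $Y \to \fX$ is the same as a principal $\bX$-bundle $E \to Y$, and by construction $\nu_{\bX}$ classifies the principal bundle $E \bX \to B \bX$. Using the universal property of $\fX$ as the stack of principal $\bX$-bundles, a map $T \to P := Y \times_{\fX} B \bX$ covering a given $f : T \to Y$ corresponds to a principal $\bX$-bundle morphism $f^* E \to E \bX$ (covering some map $T \to B \bX$). In particular, the fibre of $q$ over $y \in Y$ is the space of bundle morphisms from the trivial bundle $E|_{\{y\}}$ to $E \bX$, which is weakly contractible by Theorem \ref{classprinbun} applied to the paracompact space $\{y\}$.

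The second step is the lifting argument. Given a commutative square with $\alpha : S^{n-1} \to P$ and $\beta : D^n \to Y$ satisfying $q \alpha = \beta|_{S^{n-1}}$, the identification of Step 1 translates $\alpha$ into a bundle morphism $\Phi_0 : (\beta|_{S^{n-1}})^* E \to E \bX$, and a strict filler $\gamma : D^n \to P$ with $\gamma|_{S^{n-1}} = \alpha$ and $q \gamma = \beta$ is exactly the data of an extension of $\Phi_0$ to a bundle morphism $\Phi : \beta^* E \to E \bX$ over $D^n$. Since $D^n$ is paracompact and $S^{n-1} \hookrightarrow D^n$ is a cofibration, such an extension is provided by the relative form of Theorem \ref{classprinbun} recorded at the end of its proof. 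The main obstacle is the unpacking in Step 1: one must carefully translate the $2$-categorical pullback of stacks into concrete principal bundle data, using that $B \bX \to \fX$ is representable and classifies $E \bX$. Once this translation is in place, the verification of the lifting property is a clean application of the extension statement already proved in Theorem \ref{classprinbun}.
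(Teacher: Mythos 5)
Your proof is correct, but it is organized differently from the paper's. Both arguments rest on the same two pillars: the identification of maps $T \to Y \times_{\fX} B\bX$ lying over a given $f:T \to Y$ with $\bX$-bundle morphisms $f^*E \to E\bX$ (where $E \to Y$ is the bundle classified by $Y \to \fX$), and Theorem \ref{classprinbun}. The paper uses this identification to recognize the \emph{section spaces} of $\nu_{\bX;Y}$ and of its pullbacks as bundle-map spaces, and then invokes the abstract Lemma \ref{helplemma} (a map all of whose pulled-back section spaces are weakly contractible is a homotopy equivalence), fed by the \emph{absolute} contractibility statement of Theorem \ref{classprinbun}. You instead verify the right lifting property against $S^{n-1}\hookrightarrow D^n$ directly, using the \emph{relative} extension statement recorded at the end of the proof of Theorem \ref{classprinbun} with $X=D^n$, $A=S^{n-1}$. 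Your route buys something: the only base spaces that need to be paracompact are $D^n$ and $S^{n-1}$, so you obtain the conclusion for an arbitrary space $Y$, whereas the paper's proof explicitly restricts to paracompact $Y$ (and, when applying Lemma \ref{helplemma} with $p=f$, implicitly uses contractibility of a bundle-map space over $Y\times_{\fX}B\bX$, whose paracompactness is not addressed). The paper's route, in exchange, concludes that $\nu_{\bX;Y}$ is an honest homotopy equivalence rather than only an acyclic Serre fibration. The one point that genuinely needs the care you promise is the translation in your Step~1: a bundle morphism $f^*E \to E\bX$ covering $g:T\to B\bX$ is the same as an isomorphism $f^*E \cong g^*E\bX$, i.e.\ exactly a $T$-point of the $2$-fibre product, and this correspondence is natural in $T$ so that strictness of the filler on $S^{n-1}$ is preserved; this is the same identification the paper makes tacitly, so your argument is on safe ground.
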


For the proof of \ref{unweakequ}, we shall need a little lemma.

\begin{lem}\label{helplemma}
Let $f: Z \to Y$ be a map between topological spaces. Suppose that
for any map $p:U \to Y$, the space of sections to $f_U:
U \times_Y Z \to U$ is weakly contractible. Then $f$ is a homotopy
equivalence.
\end{lem}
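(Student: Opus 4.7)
The strategy is to apply the hypothesis twice: once with the identity base to produce a one-sided inverse, and once with $f$ itself as the base to upgrade that inverse to a genuine homotopy inverse.

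First, take $U = Y$ and $p = \id_Y$, so that $U \times_Y Z = Z$ and $f_U = f$. The hypothesis says the space of sections of $f$ is weakly contractible, hence in particular nonempty, yielding a map $s : Y \to Z$ with $f \circ s = \id_Y$. This is the first of the two homotopy-inverse conditions.

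Second, take $U = Z$ and $p = f$, so that $f_U$ is the first projection $\pi_1 : Z \times_Y Z \to Z$. Sections of $\pi_1$ are in natural bijection with continuous maps $g : Z \to Z$ satisfying $f \circ g = f$, via $g \mapsto (z \mapsto (z, g(z)))$. Both $\id_Z$ and $s \circ f$ fit this description, so both determine sections $\sigma_{\id}$ and $\sigma_{sf}$. By hypothesis, the space of sections of $\pi_1$ is weakly contractible, so $\pi_0 = 0$, i.e.\ path-connected, and we can pick a path $\gamma : I \to \operatorname{Sect}(\pi_1)$ from $\sigma_{\id}$ to $\sigma_{sf}$. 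Using the exponential law (which applies because $I$ is locally compact Hausdorff), the path $\gamma$ corresponds to a continuous map $H : Z \times I \to Z \times_Y Z$ satisfying $\pi_1 \circ H(z,t) = z$, $H(z,0) = (z,z)$ and $H(z,1) = (z, s(f(z)))$. Composing with $\pi_2$ then yields a homotopy $\pi_2 \circ H : \id_Z \simeq s \circ f$, completing the proof.

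\textbf{Main obstacle.} The only non-formal point is the passage from a path in the section space to an honest homotopy $Z \times I \to Z$. This requires that the topology on the section space is (a subspace of) the compact-open topology on $\map(U, U \times_Y Z)$, so that the exponential adjunction $\map(Z \times I, Z \times_Y Z) \cong \map(I, \map(Z, Z \times_Y Z))$ is available; since $I$ is locally compact Hausdorff, this is valid in the usual conventions. Note also that the full force of weak contractibility is not needed—only nonemptiness (for the first application) and path-connectedness (for the second) of the section spaces are actually used.
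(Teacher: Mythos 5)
Your proposal is correct and follows essentially the same route as the paper: apply the hypothesis with $p=\id_Y$ to get a section $s$, then with $p=f$ to identify sections of $\pi_1:Z\times_Y Z\to Z$ with maps $g:Z\to Z$ satisfying $f\circ g=f$ and connect $\id_Z$ to $s\circ f$ by a path, i.e.\ a homotopy. The paper even makes the same observation that only nonemptiness and path-connectedness of the section spaces are used; your extra care about the compact-open topology and the exponential law is a reasonable elaboration of a step the paper leaves implicit.
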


\begin{proof}
We only need that the space is nonempty when $p=\id_Y$ and
path-connected when $p=f$. The assumptions imply that there exists a
section $s: Y \to Z$. We have to show that $s \circ f: Z \to Z$ is
homotopic to the identity. The space of sections to the map $f_Z : Z
\times_Y Z \to Z$; $(z_1,z_2) \mapsto z_1$ is homeomorphic to the
space of maps $g:Z \to Z$ with $f \circ g = f$. The maps $\id_Z$ and
$s \circ f$ both belong to that space and by assumption they are
connected by a homotopy.
\end{proof}

\begin{proof}[Proof of Proposition \ref{unweakequ}] Let $Y$ be a paracompact
space and let $P$ be a principal $\bX$-bundle on $Y$, which gives a
map $Y \to \bX_0 \hq \bX_1$. We have to show that $\nu_{\bX; Y}:Y
\times_{\bX_0 \hq \bX_1} B \bX \to Y$ is a weak homotopy
equivalence. Note that the space of sections to $\nu_{\bX; Y}$ can
be identified with the space of bundle maps $P \to E \bX$, which is
weakly contractible by Theorem \ref{classprinbun}. Likewise, let
$p:Z \to Y$ be any map and let $Q:= p^* P$. The space of sections to
$Z \times_{\bX_0 \hq \bX_1} B \bX \cong Z \times_{Y} Y \times_{\bX_0
\hq \bX_1} B \bX \to Z$ is homeomorphic to the space of bundle maps
from $Q$ to $E \bX$, which is again contractible. Therefore Lemma
\ref{helplemma} can be applied.
\end{proof}

The classical theorem that for a topological group $G$, the set of
isomorphism classes of principal $G$-bundles on a space $Y$ is in
bijective correspondance to the set of homotopy classes of maps $Y
\to BG$ admits a generalization.

\begin{defn}\label{defconcord}
Let $\fX$ and $\fY$ be stacks. Let $h_i : \fX \to \fY$ be two
morphisms. A \emph{concordance} between $h_0$ and $h_1$ is a triple
$(h,\beta_0, \beta_1)$, where $h :\fX \times [0,1]\to \fY $ and
$\beta_i$ is a $2$-isomorphism $h_i \to j_{i}^{*} h$ for $i=0,1$
($j_i$ denotes the inclusion $\fX \cong \fX \times\{ i \} \subset
\fX \times [0,1]$).
\end{defn}

Let $\fX [Y]$ be the set of concordance classes of elements in
$\fX(Y)$. The following is an immediate consequence of Proposition
\ref{unweakequ}.

\begin{cor}\label{lemmconcord}
Let $\fX$ be represented by the groupoid $\bX$. Then there is a
natural bijection $\fX [Y] \cong [ Y; B \bX]$.
\end{cor}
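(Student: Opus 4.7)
\medskip

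The plan is to exhibit mutually inverse maps between $\fX[Y]$ and $[Y; B\bX]$, using the universal principal bundle $E\bX \to B\bX$ from \ref{univprinbun} as a ``classifying bundle'' in the classical sense. The whole argument is controlled by Theorem \ref{classprinbun}: for any principal $\bX$-bundle $P$ on a paracompact space, the space $\map_{\bX}(P, E\bX)$ of bundle morphisms is weakly contractible. Note that a bundle morphism $P \to E\bX$ induces a map on base spaces $Y \to B\bX$, and conversely a map $f:Y \to B\bX$ gives a principal $\bX$-bundle $f^{\ast} E\bX$ on $Y$ equipped with a canonical bundle map to $E\bX$. Throughout I tacitly assume $Y$ is paracompact (otherwise the statement is empty of classical content).

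\medskip

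First I define $\Phi:[Y; B\bX] \to \fX[Y]$ by $f \mapsto f^{\ast}E\bX$, which is a principal $\bX$-bundle on $Y$ and hence an object of $\fX(Y)$. A homotopy $H: Y \times [0,1] \to B\bX$ between $f_0$ and $f_1$ gives the principal bundle $H^{\ast} E\bX$ on $Y \times [0,1]$, whose restrictions to $Y \times \{0\}$ and $Y \times \{1\}$ are canonically isomorphic to $f_0^{\ast} E\bX$ and $f_1^{\ast} E\bX$; this is precisely a concordance in the sense of Definition \ref{defconcord}, so $\Phi$ descends to concordance classes.

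\medskip

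Next I define $\Psi: \fX[Y] \to [Y; B\bX]$. Given $P \in \fX(Y)$, Theorem \ref{classprinbun} produces a bundle map $\Phi_P : P \to E\bX$; I let $\Psi[P]$ be the homotopy class of the induced base map $Y \to B\bX$. Since $\map_{\bX}(P, E\bX)$ is path-connected, any two choices of $\Phi_P$ yield homotopic base maps, so the homotopy class is independent of the choice. If $(Q, \beta_0, \beta_1)$ is a concordance between $P_0$ and $P_1$, applying Theorem \ref{classprinbun} to the paracompact space $Y \times [0,1]$ gives a bundle map $Q \to E\bX$ whose induced map $Y \times [0,1] \to B\bX$ is a homotopy between representatives of $\Psi[P_0]$ and $\Psi[P_1]$ (after composition with the $2$-isomorphisms $\beta_i$, which do not alter the induced base map up to homotopy, using path-connectedness of $\map_{\bX}(P_i, E\bX)$ once more). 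Thus $\Psi$ is well-defined.

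\medskip

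Finally I check that $\Phi$ and $\Psi$ are mutually inverse. For $\Phi \circ \Psi$: given $P$, the bundle map $\Phi_P: P \to E\bX$ exhibits $P$ as canonically isomorphic to the pullback of $E\bX$ along the induced base map, so $\Phi \Psi [P] = [P]$. For $\Psi \circ \Phi$: given $f:Y \to B\bX$, the tautological bundle map $f^{\ast} E\bX \to E\bX$ has induced base map equal to $f$, so by path-connectedness of $\map_{\bX}(f^{\ast}E\bX, E\bX)$, any other classifying map for $f^{\ast}E\bX$ is homotopic to $f$. The main obstacle, which is already resolved by Theorem \ref{classprinbun}, is the passage from fibrewise data on $Y$ to global sections; once the weak contractibility of the mapping space is available, the verification is essentially the standard Milnor-style classification argument, and naturality in $Y$ is automatic from the constructions.
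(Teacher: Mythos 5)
Your argument is correct and is in substance the paper's own: the paper deduces the corollary as an ``immediate consequence'' of Proposition \ref{unweakequ}, whose proof rests on exactly the weak contractibility of $\map_{\bX}(P, E\bX)$ from Theorem \ref{classprinbun} that drives your Milnor-style classification. You simply spell out the details the paper leaves implicit (including the tacit paracompactness assumption on $Y$, which the paper also uses silently).
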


An appropriate relative version is also true; we leave this to the
reader. The last thing we want to show in this section is that
Proposition \ref{unweakequ} actually characterizes $E \bX$ and $B
\bX$ up to homotopy. Let $\bX $ be a groupoid presenting the stack
$\fX$. Let $p : X \to \fX$ be a universal weak equivalence, which
gives rise to a principal $\bX$-bundle $E \to X$. We have seen that
the space of bundle maps $E \to E \bX$ is contractible. Any such
bundle map gives rise to a map $X \to B \bX$. This map is a homotopy
equivalences, which follows immediately from \ref{unweakequ} and
from the $2$-commutativity of the diagram
\[
\xymatrix{   &   B \bX \ar[d]\\
X \ar[r] \ar[ur] & \fX.}
\]

\subsection{Paracompact groupoids}

Later on, we shall need a technical result. Slightly differing from
standard terminology, we say that a topological space $X$ is
\emph{paracompact} if any open covering of $X$ admits a subordinate
locally finite partition of unity. If $X$ is Hausdorff and
paracompact in the usual sense (i.e. any covering admits a locally
finite refinement), then $X$ is paracompact in the present sense,
see \cite{Nag}, p. 427.

We say that a topological groupoid $\bX$ is paracompact and
Hausdorff if all spaces $\bX_n$ of the nerve are paracompact and
Hausdorff. In general, the paracompactness of $\bX_0$ and $\bX_1$
does not imply the paracompactness of $\bX$. However, there are
quite large classes of groupoids which are paracompact and
Hausdorff. Examples of stacks which are presentable by paracompact
and Hausdorff groupoids include
\begin{enumerate}
\item All differentiable stacks modeled on finite-dimensional manifolds\footnote{As usual, we assume manifolds to be second countable.}: If $X$ is a manifold and $X \to \fX$ a representable surjective submersion, then all fibred products $X_n = X \times_{\fX} X \times_{\fX} \ldots X$ are manifolds and hence they are paracompact.
\item More generally, differentiable stacks modeled on Fr\'echet manifolds (under some countability conditions)
\item All topological stacks obtained by taking complex points of algebraic stacks (some finiteness condition is involved).
\item Quotient stacks $X \hq G$ if $G$ and $X$ are metrizable\footnote{It seems unlikely that the paracompactness of both $G$ and $X$ implies the paracompactness of the translation groupoid, because the product of two paracompact spaces is in general not paracompact.}.
\end{enumerate}

Proposition \ref{paracompactness} says that the classifying space $B
\bX = \| N_{\bullet} \bX \|$ of a paracompact and Hausdorff groupoid
is paracompact. This is discussed in the appendix.

\section{Proof of the main results}\label{proofs}

We shall first prove Theorem \ref{mainthm1} and then \ref{mainthm2}.
Let us set up notation. First of all, let $\cS$ be a small
$2$-category and $\cJ: \cS \to  \topstacks$ be a $2$-functor, such
that any stack in the image of $\cJ$ admits a presentation by a
paracompact groupoid. To simplify notation, we assume that $\cJ$ is
the inclusion of a small subcategory; the argument in the general
case is the same. Stacks will be denoted by German letters $\fX,
\fY, \fZ$ and they are tacitly assumed to be in $\cS$; likewise for
morphisms and $2$-morphisms. By the corresponding symbols $\bX,
\bY,\bZ$ we will denote paracompact groupoids representing the
stacks.

For any stack in $\cS$ we choose a presentation by a paracompact
groupoid\footnote{Here we need the axiom of choice, hence we use
that $\cS$ is small.}, denoted by $\varphi_{\fX}: \bX_0 \hq \bX_1
\to \fX$. We can view $\varphi_{\fX}^{-1}$ as a principal $\bX$-bundle on the stack $\fX$, denoted
by $U_{\bX}$. In section \ref{stackprinbun}, we constructed a map
$\nu_{\bX}:B \bX \to \bX_0 \hq \bX_1$. We consider the composition
$\eta_{\bX}= \varphi_{\fX} \circ \nu_{\bX}:B \bX \to \fX$.

Consider a morphism $f: \fX \to \fY$ of stacks. There is a diagram
\[
\xymatrix{
B\bX \ar[d]^{\nu_{\bX}} & B \bY \ar[d]^{\nu_{\bY}} \\
\bX_0 \hq \bX_1 \ar[d]^{\varphi_{\fX}} & \bY_0 \hq \bY_1 \ar[d]^{\varphi_{\fY}} \\
\fX \ar[r]^{f} & \fY,}
\]
and the vertical maps are universal weak equivalences by Proposition \ref{unweakequ}. The
pullback $ \eta_{\fX}^{*} f^* U_{\bY}$ is a principal $\bY$-bundle
on $B \bX$. Let $E_{f}$ be the space of bundle morphisms from $
\eta_{\fX}^{*} f^* U_{\bY}$ to the universal $\bY$-bundle $E \bY \to
B \bY$. By Proposition \ref{paracompactness} $B \bX$ is paracompact
and therefore, by Theorem \ref{classprinbun}, $E_f$ is weakly
contractible. Thus we get a $2$-commutative diagram
\begin{equation}\label{2diagram}
\xymatrix{
E_{f} \times B \bX \ar[d]^{\eta_{\bX} \circ p_2} \ar[r]^-{\epsilon_f} & B \bY \ar[d]^{\eta_{\bY}} \\
\fX \ar[r]^{f} & \fY,
}
\end{equation}
in $\topstacks$; $\epsilon_f$ is the evaluation map and $p_2 $
denotes the projection onto the second factor.

If $f: \fX \to \fY$ and $g: \fY \to \fZ$ are morphisms,
then there is a composition map $c_{g,f}:E_g \times E_f \to E_{g
\circ f}$ which makes the diagram

\[
\xymatrix{
E_{gf} \times B \bX  \ar[r]^{\epsilon_{gf}} & B \bZ \\
E_g \times E_f \times B \bX \ar[r]^-{\id \times \epsilon_f} \ar[u]^{c_{g,f} \times \id} & E_g \times B \bY  \ar[u]^{\epsilon_g}\\
}
\]

commutative. The map $c_{g,f}$ arises in the following way. There is a specified map $E \bY \to U_{\bY}$ of $\bY$-bundles. Thus any element of $E_f$ defines a map $\eta_{\fX}^{*} f^{*} U_{\bY} \to \eta_{\fY}^{*} U_{\bY}$ of $\bY$-bundles and therefore, after application of the morphism $g$ of stacks, a map $\eta_{\fX} f^* g^* U_{\bZ} \to \eta_{\fY}^{*} g^{*} U_{\bZ}$ of $\bZ$-bundles, which can be composed with any element of $E_g$.

The collection of these maps is associative in the
sense that $c_{h \circ g, f} \circ (c_{h,g} \times \id) = c_{h, g
\circ f} \circ (\id \times c_{g,f}):E_h \times E_g \times E_f \to
E_{h \circ g \circ f}$ when $h,g,f$ are composable morphisms. Also,
$\id \in E_{\id}$.

Any $2$-isomorphism $\phi: f \to g$ yields an isomorphism
$\eta_{\bX}^{*} f^* U_{\bY} \cong \eta_{\bX}^{*} g^* U_{\bY}$ and
hence a homeomorphism $\phi^*: E_g \to E_f$. Moreover, the diagram

\begin{equation}\label{commutative}
\xymatrix{
E_g \times B \bX \ar[r]^-{\epsilon_f} \ar[d]^{\phi^* \times \id} & B \bY \ar@{=}[d]\\
E_f \times B \bX \ar[r]^-{\epsilon_g} & B \bY
}
\end{equation}

is commutative.

Now we define an topological category $\widetilde{\tau_{\leq
1}\cS}$. It has the same objects as $\cS$ (and the discrete topology
on the object set). The morphism spaces are

\[
\widetilde{\tau_{\leq 1}\cS}(\fX; \fY) := \coprod_{\cS(\fX; \fY)}
E_f
\]

with the composition described above. The morphism spaces have
contractible components (one for each $1$-morphism in $\cS$). There
is an obvious functor $p:\widetilde{\tau_{\leq 1} \cS} \to
\tau_{\leq 1} (\cS)$.

The classifying space construction determines a continuous functor
\[
\widetilde{\Ho} : \widetilde{\tau_{\leq 1}\cS} \to \ttop,
\]
which sends a stack $\fX$ to the space $B \bX$ and which is the
identity on morphism spaces. The universal weak equivalences
$\eta_{\bX}$ assemble to a pseudo-natural transformation of functors

\[
\eta: \st \circ \widetilde{\Ho} \to \inc \circ p
\]

of functors $\widetilde{\tau_{\leq 1} \cS} \to \tau_{\leq 1}
\topstacks$
(use \ref{2diagram} to verify this). This finishes the proof of Theorem \ref{mainthm1}.\\

Theorem \ref{mainthm2} follows from Theorem \ref{mainthm1} and a
rectification procedure for homotopy-commutative diagrams which is
proven in Nathalie Wahl's paper \cite{Wahl}, following ideas of
Dwyer, Kan and G. Segal. Roughly, she proves that a functor which is
defined up to contractible choices can be strictified. More
precisely, Proposition 2.1. of loc. cit can be reformulated as
follows.

\begin{prop}\cite{Wahl}
Let $\cC $ be a discrete small\footnote{This is an essential
assumption.} category and let $(\widetilde{\cC},p,F)$ be a functor
to $\ttop$ which is defined up to contractible choice. Then there
exists a functor $p_* F: \cC \to \ttop$ and a zig-zag of natural
transformations, which are weak homotopy equivalences on all
objects, connecting $p^* p_* F$ and $F$.
\end{prop}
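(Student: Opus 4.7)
The plan is to build $p_*F$ by a two-sided bar construction -- equivalently, a homotopy left Kan extension along $p$ -- following the standard strictification technique of Dwyer--Kan--Segal that underlies Wahl's argument.

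For each object $y \in \Ob \cC$, form the simplicial space
\[
B_n(F;y) := \coprod_{y_0, \ldots, y_n \in \Ob \cC} F(y_0) \times \widetilde{\cC}(y_0, y_1) \times \cdots \times \widetilde{\cC}(y_{n-1}, y_n) \times \cC(y_n, y),
\]
with face maps using the $\widetilde{\cC}$-action on $F$ (for $d_0$), composition in $\widetilde{\cC}$ (for $d_i$ with $0<i<n$), and composition in $\cC$ after applying $p$ (for $d_n$); the degeneracies insert identities. Define $p_*F(y) := \| B_\bullet(F;y)\|$, where $\|\cdot\|$ denotes the thick geometric realization used throughout the paper. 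Post-composition on the last factor by morphisms in $\cC$ endows $p_*F$ with a \emph{strict} functorial structure, since composition in $\cC$ is strictly associative.

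To compare $p^*p_*F$ with $F$, introduce the auxiliary simplicial space $\widetilde{B}_\bullet(F;y)$ obtained from $B_\bullet(F;y)$ by replacing the final factor $\cC(y_n,y)$ with $\widetilde{\cC}(y_n,y)$. Applying $p$ to that factor yields a simplicial map $\widetilde{B}_\bullet(F;-) \to B_\bullet(F;-)$ which is natural on $\widetilde{\cC}$. On the other side, iteration of the $\widetilde{\cC}$-action on $F$ defines a simplicial augmentation $\widetilde{B}_\bullet(F;-) \to F(-)$, again natural on $\widetilde{\cC}$. Taking thick realizations gives the required zig-zag
\[
F \longleftarrow \| \widetilde{B}_\bullet(F;-) \| \longrightarrow \| B_\bullet(F;-) \| = p^*p_*F
\]
of natural transformations of continuous functors $\widetilde{\cC} \to \ttop$.

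It remains to verify that both arrows are objectwise weak equivalences. For the right arrow: since $p : \widetilde{\cC}(y_n,y) \to \cC(y_n,y)$ has contractible fibers over every morphism, the map $\widetilde{B}_n(F;y) \to B_n(F;y)$ is a weak equivalence in each simplicial degree, and thick geometric realization preserves levelwise weak equivalences of simplicial spaces (no Reedy conditions required). For the left arrow: the augmented simplicial space $\widetilde{B}_\bullet(F;y) \to F(y)$ carries an extra degeneracy
\[
s_{-1}(x_0, \gamma_1, \ldots, \gamma_n, \beta) := (x_0, \gamma_1, \ldots, \gamma_n, \beta, \id_y) \in \widetilde{B}_{n+1}(F;y),
\]
(with new last vertex $y_{n+1} = y$), which furnishes a simplicial contraction onto $F(y)$, so that $\|\widetilde{B}_\bullet(F;y)\| \to F(y)$ is a homotopy equivalence.

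The main obstacle is the naturality bookkeeping: one must check that post-composition on the last factor makes $\widetilde{B}_\bullet(F;-)$ into an honest continuous functor on $\widetilde{\cC}$, that the augmentation and the map induced by $p$ are strictly natural with respect to this action, and that $p_*F$ is genuinely strictly functorial on $\cC$. Each of these reduces to the strict associativity of composition in the topological categories $\widetilde{\cC}$ and $\cC$, together with the fact that $F$ is a \emph{strict} continuous $\widetilde{\cC}$-functor by hypothesis.
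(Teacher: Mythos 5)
The paper does not prove this Proposition at all: it is imported verbatim from Wahl's paper (Proposition 2.1 of \cite{Wahl}), so there is no in-text argument to compare against. Your proposal supplies exactly the standard Dwyer--Kan--Segal strictification that underlies Wahl's proof: $p_*F(y)=\|B_\bullet(F;y)\|$ is the bar-resolution model of the homotopy left Kan extension along $p$, made strictly functorial on $\cC$ by letting $\cC$ act on the outer factor, and the zig-zag through $\|\widetilde{B}_\bullet(F;-)\|$ is the usual comparison. The construction is correct, and both legs are handled by the right tools: smallness of $\cC$ is what makes the coproducts legitimate, the right-hand leg uses that the thick realization preserves levelwise weak equivalences with no goodness or Reedy hypotheses (which is the reason for using $\|\cdot\|$ here, since nothing forces the identities of $\widetilde{\cC}$ to be nondegenerate basepoints), and strict functoriality and naturality reduce to associativity as you say.

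The one step I would ask you to expand is the left-hand leg. The textbook statement ``an extra degeneracy yields a simplicial contraction, hence the realization of the augmentation is an equivalence'' is proved using the degeneracy maps, which the thick realization discards; and the alternative route through $\|X_\bullet\|\to|X_\bullet|$ needs $\widetilde{B}_\bullet$ to be good, which is not available under the stated hypotheses. The fix is routine but should be said: the extra degeneracy $s_{-1}$ commutes with all face maps and satisfies $d_{n+1}s_{-1}=\id$, so the explicit cone homotopy $(x,t,s)\mapsto (s_{-1}x,((1-s)t,s))$ is compatible with the face identifications alone and therefore descends to a deformation of $\|\widetilde{B}_\bullet(F;y)\|$ onto the image of $F(y)\ni v\mapsto (v,\id_y)$, exhibiting the augmentation as a homotopy equivalence. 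This is the same mechanism the paper invokes in the proof of Proposition \ref{lemma1} (natural transformations still realize to homotopies after thick realization, cf.\ \cite{segcat}, Prop.\ A.1). With that point made explicit, your argument is a complete and self-contained proof of the cited result.
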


To finish the proof of Theorem \ref{mainthm2}, take the functor
$\widetilde{\Ho}$ from Theorem \ref{mainthm1} and put
\[
 \Ho(\fX) := p_* \widetilde{\Ho} (\fX).
\]
The additional assertions about $2$-isomorphic and concordant
morphisms of stacks follow from diagram \ref{commutative} and
\ref{lemmconcord}, respectively.

\subsection{Uniqueness of the homotopy type functor}

So far, we have not addressed the question whether our construction
is unique. Here is a uniqueness result which seems to be
satisfactory enough.

Let $\cS$ be a small category of stacks, all of whose objects can be
presented by paracompact groupoids. Let $(\widetilde{\tau_{\leq 1}
\cS},p, \widetilde{\Ho})$ be the homotopy type functor and $\eta:
\st \circ \widetilde{Ho} \to \inc \circ p$ be the natural
transformation constructed in Theorem \ref{mainthm1}. Let
$(\widetilde{\tau_{\leq 1} \cS} \dash ,p \dash ,  \widetilde{\Ho}
\dash )$ be a functor $\tau_{\leq 1} \cS \to \ttop$, defined up to
contractible choice and let $\eta \dash: \st \circ
\widetilde{Ho}\dash \to \inc \circ p \dash$ be a natural
transformation of functors such that $\eta_{\fX}^{\prime}$ is a
universal weak equivalence for any object $\fX$ of $\cS$. These two
functors together yield a functor defined up to contractible choice
on the category $\cS \times \{0,1\}$. Using the arguments from the
proof of Theorem \ref{mainthm1}, one can easily show:

\begin{prop}
Under the circumstances above, there exists a functor
$\overline{\Ho}:\cS \times (0 \to 1) \to \ttop$, defined up to
contractible choice which agrees with $(\widetilde{\tau_{\leq 1}
\cS},p, \widetilde{\Ho})$ on $\cS \times \{ 0 \}$ and with
$(\widetilde{\tau_{\leq 1} \cS}  \dash ,p \dash ,  \widetilde{\Ho}
\dash )$ on $\cS \times \{ 1 \}$. The natural transformations $\eta$
and $\eta \dash$ yield a natural transformation.
\end{prop}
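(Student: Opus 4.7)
The category $\cS\times (0\to 1)$ has objects $(\fX,0)$ and $(\fX,1)$ for $\fX\in\cS$, together with the morphisms of $\cS\times\{0\}$, the morphisms of $\cS\times\{1\}$, and for each $1$-morphism $f:\fX\to\fY$ of $\cS$ one extra arrow $(\fX,0)\to(\fY,1)$ lying over $f$. The plan is to let $\overline{\Ho}$ restrict to $\widetilde{\Ho}$ on the first layer and to $\widetilde{\Ho}\dash$ on the second, reducing the task to constructing, for each such $f$, a weakly contractible space $\overline{E}_f$ of ``cross'' bundle maps, equipped with a strictly associative composition law against the already-constructed $E_g$ and $E_g\dash$.

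\textbf{Construction of the cross arrow spaces.} I would mimic the bundle-theoretic construction from the proof of Theorem \ref{mainthm1}. For each $\fY\in\cS$, pull back the universal principal $\bY$-bundle $U_{\bY}$ (coming from the chosen atlas $\varphi_{\fY}$) along the universal weak equivalence $\eta_{\fY}\dash$ to obtain a principal $\bY$-bundle $\widetilde{U}\dash_{\bY}$ on $\widetilde{\Ho}\dash(\fY)$. For $f:\fX\to\fY$ define
\[
\overline{E}_f := \map_{\bY}\!\bigl(\eta_{\fX}^{*}f^{*}U_{\bY},\; \widetilde{U}\dash_{\bY}\bigr),
\]
the space of bundle maps covering arbitrary continuous maps of base spaces. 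Every point of $\overline{E}_f$ produces both an underlying map $h:\widetilde{\Ho}(\fX)\to \widetilde{\Ho}\dash(\fY)$ and, by pushing forward through $\eta_{\fY}\dash$, a $2$-isomorphism $\eta_{\fY}\dash\circ h \Rightarrow f\circ \eta_{\fX}$; this is exactly the data that feeds the pseudo-natural transformation. Composition with elements of $E_g$ or $E_g\dash$ on either side is defined by composition of bundle maps, in direct analogy with the $c_{g,f}$ of the proof of Theorem \ref{mainthm1}, and associativity is automatic.

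\textbf{Contractibility.} By Proposition \ref{paracompactness} the base $\widetilde{\Ho}(\fX)=B\bX$ is paracompact, so the strategy of Theorem \ref{classprinbun} applies: choose a locally finite cover of $B\bX$ trivialising $\eta_{\fX}^{*}f^{*}U_{\bY}$; on each trivialising open $U$ the local space of bundle maps into $\widetilde{U}\dash_{\bY}$ is homeomorphic to the space of maps $U\to \widetilde{\Ho}\dash(\fY)$ compatible with the structure maps to $\bY_0$, and Lemma \ref{helplemma} combined with the universal weak equivalence property of $\eta_{\fY}\dash$ shows that these local mapping spaces are weakly contractible. The same inductive simplex argument with a subordinate partition of unity as in Theorem \ref{classprinbun} glues these local choices into a global element and shows that $\overline{E}_f$ is itself weakly contractible.

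\textbf{Pseudo-natural transformation and main obstacle.} The $2$-isomorphism carried by each point of $\overline{E}_f$ assembles, via diagram \eqref{commutative} and the coherence arguments already used for $\eta$ and $\eta\dash$, into the cross component of a pseudo-natural transformation $\overline{\eta}:\st\circ\overline{\Ho}\to \inc\circ \overline{p}$ extending $\eta$ and $\eta\dash$. The main technical obstacle is the contractibility step: unlike in Theorem \ref{mainthm1}, the target $\widetilde{\Ho}\dash(\fY)$ need not itself be the classifying space of a groupoid presenting $\fY$, so Theorem \ref{classprinbun} does not apply in its original form. The universal weak equivalence hypothesis on $\eta_{\fY}\dash$, fed through Lemma \ref{helplemma}, is exactly what substitutes for the missing groupoid structure and makes the partition-of-unity argument go through in this more general setting.
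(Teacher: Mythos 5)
Your overall architecture matches the paper's (very terse) intention exactly: the paper's entire proof is the remark that the two functors combine to a functor up to contractible choice on $\cS\times\{0,1\}$ and that ``the arguments from the proof of Theorem \ref{mainthm1}'' supply the cross arrows; your spaces $\overline{E}_f$ of bundle maps into $\widetilde{U}\dash_{\bY}$, with composition against $E_g$ and $E_g\dash$, are the right objects. You have also correctly located the one place where those arguments do not transfer verbatim, namely that Theorem \ref{classprinbun} needs the target bundle to have a weakly contractible space of sections over $\bY_0$, which is not given for $\widetilde{U}\dash_{\bY}$.

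However, your proposed repair of that step does not work: Lemma \ref{helplemma} is invoked in the wrong direction. That lemma proves \emph{(contractible section spaces for all pullbacks)} $\Rightarrow$ \emph{(homotopy equivalence)}; you need the converse, namely that because $\eta_{\fY}\dash$ is a universal weak equivalence, the space of sections of $U\times_{\fY}\widetilde{\Ho}\dash(\fY)\to U$ (which is what the local space of bundle maps over a trivialising open $U$ identifies with) is weakly contractible. A map all of whose pullbacks are weak homotopy equivalences need not have weakly contractible -- or even nonempty -- section spaces unless it is also a fibration in a suitable sense; in the paper's own construction this contractibility is not a formal consequence of Proposition \ref{unweakequ} but comes from Proposition \ref{lemma1}, i.e.\ from the explicit fibrewise deformation retraction of the specific model $E\bY=B(\bX\downarrow\bX)$ onto a section, which is exactly the ingredient your general $\widetilde{\Ho}\dash(\fY)$ lacks. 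So the weak contractibility of $\overline{E}_f$, and with it the whole statement, is not established. (The gap is arguably already latent in the paper's ``one can easily show''.) To close it you must either strengthen the hypothesis on $\eta\dash$ -- e.g.\ require that for every map $U\to\fY$ the section space of $U\times_{\fY}\widetilde{\Ho}\dash(\fY)\to U$ be weakly contractible, which is what Theorem \ref{classprinbun} actually delivers for the paper's own $\eta$ and is the property genuinely used throughout Section \ref{proofs} -- or supply a separate argument (for instance a fibration replacement compatible with the bundle structure) showing that universal weak equivalence implies this section-space contractibility in your setting.
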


\appendix

\section{Appendix: Point-set-topology of classifying spaces}\label{anhang}

Let $\bX$ be a topological groupoid. There are three different
descriptions of the universal $\bX$-bundle each of which has some
advantages. The first model, which was used in section
\ref{stackprinbun} is the thick geometric realization of the
categories $\bX \downarrow \bX$ and $\bX$. The use of this model
makes the proofs of Lemma \ref{lemma1} and hence Theorem
\ref{classprinbun} particularly transparent.

Another description goes back to Haefliger \cite{Haef}; it is a
generalization of Milnor's classical construction \cite{Mil}. Let
$E^{Mil} \bX$ be the space consisting of sequences $ (t_0 f_0, t_1
f_1, \ldots ) $, where $f_i \in \bX_1$ all have the same target;
$t_i \in [0,1]$, $t_i =0$ for all but finitely many $i \in \bN$,
$\sum_i t_i = 1$. Two sequences $(t_0 f_0, t_1 f_1, \ldots )$ and
$(t_0 \dash f_0 \dash, t_1 \dash f_1 \dash, \ldots )$ are equivalent
if $t_i = t_i \dash$ for each $i$ and $f_i = f_i \dash$ whenever
$t_i \neq 0$. The topology on $E^{Mil} \bX$ is the weakest topology
such that the maps $t_i : E^{Mil} \bX \to [0,1]$ and $f_i :
t_{i}^{-1} (0,1] \to  \bX_1$ are continuous.

To obtain the space $B^{Mil} \bX$, we divide by the following
equivalence relation. Two sequences $(t_0 f_0, t_1 f_1, \ldots )$
and $(t_0 \dash f_0 \dash, t_1 \dash f_1 \dash, \ldots )$ are
identified if $t_i = t_i \dash$ for each $i$ and if there exists an
$f \in \bX_1$ such that $f_i \dash = f_i f$ for all $i$.

A map $E^{Mil} \bX \to \bX_0$ is defined by sending $(t_0 f_0, t_1
f_1, \ldots )$ to the common target of the $f_i$'s, and the action
of $\bX$ is equally easy to define. This description is convenient
for the proof of Proposition \ref{isprinbun}, see \cite{Noo2}.

\begin{lem}\cite{seg}
There are natural homeomorphisms $\|N_{\bullet} \bX \| \cong  B^{Mil} \bX$ and
$\|N_{\bullet} \bX \downarrow \bX \| \cong E^{Mil} \bX $.
\end{lem}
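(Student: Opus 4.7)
The plan is to construct explicit mutually inverse continuous maps between the two spaces and to verify that they respect the equivalence relations defining each. Since the argument for the second homeomorphism $\|N_\bullet (\bX \downarrow \bX)\| \cong E^{Mil} \bX$ is parallel to that for the first (with the ``common target'' data in the Milnor picture supplied directly by the arrow-category structure, and the right-multiplication quotient absent on the $E^{Mil}$ side), I will focus on $\|N_\bullet \bX\| \cong B^{Mil} \bX$.

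I would first define a map $\Psi : \|N_\bullet \bX\| \to B^{Mil} \bX$ on representatives as follows. Given $((f_1, \ldots, f_n), (t_0, \ldots, t_n)) \in N_n \bX \times \Delta^n$ representing a chain $x_0 \to x_1 \to \cdots \to x_n$, set $h_i := f_{i+1} f_{i+2} \cdots f_n : x_i \to x_n$ (with $h_n := \id_{x_n}$), so that all $h_i$ share the common target $x_n$; then send the representative to the class of $(t_0 h_0, t_1 h_1, \ldots, t_n h_n, 0, 0, \ldots)$ in $B^{Mil} \bX$. The key check is that $\Psi$ respects the face identifications of the thick realization: the face operator $d_j$ on $N_n\bX$ (which composes $f_j$ with $f_{j+1}$ for $0 < j < n$) corresponds on the Milnor side to dropping the $j$-th coordinate to zero, which has no effect on the Milnor equivalence class.

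Conversely, I would define $\Phi : B^{Mil} \bX \to \|N_\bullet \bX\|$ as follows. Given a Milnor point with support $S = \{i_0 < \cdots < i_k\}$, the morphisms $g_j := f_{i_{j-1}} \cdot (f_{i_j})^{-1} : x_{i_{j-1}} \to x_{i_j}$ are invariant under right multiplication by a common $f$ (the factors cancel), and they assemble into a chain $(g_1, \ldots, g_k) \in N_k \bX$. Send the point to the thick-realization representative $((g_1, \ldots, g_k), (t_{i_0}, \ldots, t_{i_k})) \in N_k \bX \times \Delta^k$. The identity $h_{j-1} (h_j)^{-1} = f_j$ then yields $\Phi \circ \Psi = \id$, and a symmetric computation handles $\Psi \circ \Phi$ (using the right-multiplication equivalence to absorb the difference $h_{i_j} = f_{i_j} f_{i_k}^{-1}$); continuity follows because when some $t_{i_j} \to 0$ the Milnor support shrinks by one element, which matches a face inclusion on the thick-realization side exactly.

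The main obstacle I expect is that the two spaces have somewhat different indexing conventions for their cells (the thick realization has one cell per chain at each level $n$, whereas $B^{Mil}\bX$ has cells indexed by pairs consisting of a chain and a finite subset $S \subset \bN$ of the appropriate size); making the homeomorphism work requires carefully verifying that the gluings match via the unique order-preserving bijection $S \cong [|S|-1]$ and that the topologies (the quotient topology on the thick realization, and the weakest topology making the $t_i, f_i|_{t_i>0}$ continuous on the Milnor side) agree under this identification. Once this bookkeeping is unwound, both the compatibility of $\Psi$ and $\Phi$ with the equivalence relations and their continuity reduce to routine coordinatewise verifications.
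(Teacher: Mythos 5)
The paper gives no proof of this lemma at all --- it is quoted from Segal's \emph{Classifying spaces and spectral sequences} --- so there is no argument of the paper's to compare yours against; you are supplying a proof from scratch. Unfortunately, your proof has a genuine gap, and it sits exactly at the step you flag as ``the key check''. You assert that applying the face operator $d_j$ corresponds on the Milnor side to ``dropping the $j$-th coordinate to zero, which has no effect on the Milnor equivalence class''. That is false for $j<n$. The equivalence relation defining $E^{Mil}\bX$ lets you forget the \emph{morphism} $f_j$ at an index where $t_j=0$, but it does not delete the index $j$ from the sequence: the coordinates $t_i$ are well-defined functions on $E^{Mil}\bX$ (and on $B^{Mil}\bX$, since simultaneous right multiplication also preserves them). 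Concretely, $\Psi(f,\delta_j t)$ is a Milnor point whose $t$-support is $\{0,\dots,n\}\setminus\{j\}$, while $\Psi(d_j f,t)$ has $t$-support $\{0,\dots,n-1\}$; your own computation shows the morphism data match after deleting $h_j$, but the supports do not, so these are distinct points of $B^{Mil}\bX$ whenever $j<n$. Hence $\Psi$ does not descend to the thick realization (it is compatible only with the last face). Dually, your $\Phi$ is well defined and plausibly continuous, but it is not injective: two Milnor points whose supports are distinct subsets of $\bN$ of the same cardinality, carrying the same ordered tuples of morphisms and weights, have the same image. So the two maps are not mutually inverse.

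This is not a bookkeeping issue that more care about indexing will repair: with the definitions exactly as in the paper, the thick realization has a single cell $N_n\bX\times\Delta^n$ in each dimension $n$, whereas $E^{Mil}\bX$ has one such piece for \emph{every} finite subset $S\subset\bN$ with $|S|=n+1$. Already for the trivial groupoid, $E^{Mil}$ is the infinite simplex (with infinitely many vertices) while $\|N_{\bullet}(\bX\downarrow\bX)\|$ has a single $0$-cell, so no cell-matching bijection of the kind you propose can exist. What your construction does honestly produce is the natural continuous surjection $\Phi\colon E^{Mil}\bX\to\|N_{\bullet}(\bX\downarrow\bX)\|$ collapsing each support $S$ onto the initial segment $\{0,\dots,|S|-1\}$; one can show this is a homotopy equivalence (and likewise for $B^{Mil}$), but it is not a homeomorphism. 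In other words, you have run into a real delicacy in the statement itself: what your method can prove is a natural homotopy equivalence, and if a genuine homeomorphism is needed (e.g.\ to transport the principal-bundle structure or the paracompactness argument between the two models verbatim), the lemma requires either a more careful formulation of one of the two constructions or a direct appeal to, and verification of, Segal's original statement.
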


There is a slightly different description of $\|N_{\bullet} \bX \|$
which helps to show Proposition \ref{paracompactness} below. Let
$X_{\bullet}$ be an arbitrary simplicial space. Let $\sk_n
\|X_{\bullet} \| := \bigcup_{k \leq n} X_k \times \Delta^k / \sim $.
Then
\[
 \| X_{\bullet} \| \cong \colim_n \sk_n \|X_{\bullet} \|
\]
and furthermore $\sk_n \|X_{\bullet} \|$ is the pushout
\begin{equation}
\xymatrix{
X_n \times \partial \Delta^n \ar[r] \ar[d] & \sk_{n-1} \|X_{\bullet} \| \ar[d] \\
X_n \times \Delta^n \ar[r] & \sk_n \|X_{\bullet} \|,
}
\end{equation}
where $\sk_{-1} \|X_{\bullet} \|= \emptyset$. Thus, $\sk_n
\|X_{\bullet} \|$ is the double mapping cylinder of a map $X_n
\leftarrow X_n \times \partial \Delta^n \to \sk_{n-1} \|X_{\bullet}
\|$. Therefore $\sk_n \|X_{\bullet} \| \subset \sk_{n+1}
\|X_{\bullet} \|$ is a neighborhood retract, as $X_{n+1} \times
\partial \Delta^{n+1}$ is a neighborhood retract in $X_{n+1} \times
\Delta^{n+1}$.

\begin{prop}\label{paracompactness}
Let $X_{\bullet}$ be a simplicial topological space such that all
$X_n$ are paracompact and Hausdorff. Then the thick realization
$\|X_{\bullet} \|$ is paracompact (but not necessarily Hausdorff).
\end{prop}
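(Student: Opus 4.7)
My plan is to combine the filtration $\|X_\bullet\| = \colim_n \sk_n\|X_\bullet\|$ set up just before the statement with two classical paracompactness facts: (i) a pushout of paracompact spaces along a closed cofibration is paracompact, and (ii) a countable colimit of paracompact spaces along closed neighborhood retracts is paracompact. Both structural ingredients needed to apply these facts are already assembled in the paragraph preceding the proposition, so most of the work is in organizing the induction and the partition-of-unity argument in the colimit.

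First I would show by induction on $n$ that each skeleton $\sk_n\|X_\bullet\|$ is paracompact. The base case $\sk_0\|X_\bullet\| = X_0$ holds by hypothesis. The inductive step uses the displayed pushout: $\sk_n\|X_\bullet\|$ is the pushout of $\sk_{n-1}\|X_\bullet\|$ and $X_n \times \Delta^n$ along the closed cofibration $X_n \times \partial\Delta^n \hookrightarrow X_n \times \Delta^n$. Since $\Delta^n$ is compact Hausdorff, the space $X_n \times \Delta^n$ is paracompact (product of paracompact with compact Hausdorff), and $\sk_{n-1}\|X_\bullet\|$ is paracompact by the inductive hypothesis; fact (i) then yields paracompactness of $\sk_n\|X_\bullet\|$.

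For the passage to the colimit, given an open cover $\cU$ of $\|X_\bullet\|$, I would construct a subordinate locally finite partition of unity by iterated extension. Using that $\sk_{n-1}\|X_\bullet\| \subset \sk_n\|X_\bullet\|$ is a closed neighborhood retract, I would fix coherent open ``collars'' $W_n \supset \sk_n\|X_\bullet\|$ in $\|X_\bullet\|$ together with retractions $r_n : W_n \to \sk_n\|X_\bullet\|$ and continuous cutoff functions $\psi_n$ equal to $1$ on $\sk_n\|X_\bullet\|$ and supported in $W_n$. A subordinate locally finite partition of unity on each $\sk_n\|X_\bullet\|$, chosen inductively so as to extend the one on $\sk_{n-1}\|X_\bullet\|$ across the cofibration, is then pulled back along $r_n$ and multiplied by telescoping products of the $\psi_j$ and $1-\psi_j$; after renormalization this yields the desired global partition of unity subordinate to $\cU$.

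The main obstacle will be the colimit step, specifically ensuring local finiteness of the resulting family on $\|X_\bullet\|$, which does not carry a metrizable structure and where many floors of the filtration could a priori interfere near a single point. The neighborhood retract structure highlighted in the excerpt is exactly what resolves this: the collars $W_n$ can be shrunk so that only two consecutive levels contribute in a neighborhood of any given point, while the retractions $r_n$ propagate subordination upward. Hausdorffness is not asserted, in keeping with the fact that a weak-topology colimit of Hausdorff spaces need not be Hausdorff.
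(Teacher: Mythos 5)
Your proposal follows essentially the same route as the paper: the two ``classical facts'' you invoke are precisely the paper's Lemma \ref{lemmaeins} (paracompactness of an adjunction space $X\cup_A Y$ along a closed neighborhood retract, with the Hausdorff piece $X_n\times\Delta^n$ supplying normality for Urysohn's lemma) and Lemma \ref{lemmazwei} (paracompactness of a countable colimit along closed neighborhood retracts, proved by exactly the telescoping partition-of-unity extension you sketch). The argument is correct and matches the paper's proof in both structure and detail.
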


A sketch of the proof of Proposition \ref{paracompactness} can be
found in \cite{GH}, p. 14. Here are the details. First note that $
X_n \times \partial \Delta^n $ is paracompact by \cite{Nag}, p.223.
Therefore, the proof of Lemma \ref{paracompactness} is accomplished
by the following two lemmata.

\begin{lem}\label{lemmaeins}
Let $X$ be a paracompact Hausdorff space, $A \subset X$ be a closed
neighborhood retract, $Y$ a paracompact space and let $f:X \to Y$ be
a continuous map. Then $X \cup_A Y $ is paracompact.
\end{lem}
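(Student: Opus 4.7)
The plan is to construct, for any open cover $\mathcal{W}$ of $Z := X \cup_A Y$ (with $q : X \sqcup Y \to Z$ the quotient), a locally finite partition of unity subordinate to $\mathcal{W}$. The strategy is to split $Z$ via a cutoff into two overlapping open pieces --- a neighborhood of $q(Y)$ on which a retraction to $q(Y)$ is available, and an open subspace of $q(X)$ avoiding $q(A)$ --- build partitions of unity separately on each using paracompactness of $Y$ and of the $X$-piece, and glue using the cutoff.

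For the setup, I use that $X$ is normal (being paracompact Hausdorff) and the retraction $r : U \to A$ to produce, by Urysohn, a continuous $\phi : X \to [0,1]$ with $\phi \equiv 0$ on $A$ and $\phi \equiv 1$ off $U$, and $\sigma_X : X \to [0,1]$ with $\sigma_X \equiv 1$ on $\phi^{-1}[0, 1/3]$ and $\sigma_X \equiv 0$ on $\phi^{-1}[2/3, 1]$. Gluing $\sigma_X$ on $q(X)$ with $1$ on $q(Y)$ (consistent because $\sigma_X|_A \equiv 1$) yields a continuous cutoff $\sigma : Z \to [0,1]$, and $r$ together with $f|_A$ yields a continuous retraction $\rho : q(U \cup Y) \to q(Y)$. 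Set $\Omega_1 := q(\phi^{-1}[0, 1/2) \cup Y)$ and $\Omega_2 := q(\phi^{-1}(1/3, 1])$; these open sets cover $Z$, and $\Omega_2$ is homeomorphic to the $F_\sigma$ open subspace $\phi^{-1}(1/3, 1] \subset X$, hence paracompact.

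Using paracompactness of $Y$ and of $\Omega_2$, pick locally finite partitions of unity $\{\eta_j\}$ on $Y$ and $\{\mu_k\}$ on $\Omega_2$ subordinate to $\mathcal{W}|_{q(Y)}$ and $\mathcal{W}|_{\Omega_2}$, with $\supp \eta_j \subset W_j \cap Y$ and $\supp \mu_k \subset W_k \cap \Omega_2$ for some $W_j, W_k \in \mathcal{W}$ (using a shrinking refinement from normality). Extend them by
\[
\hat{\eta}_j(z) := \sigma(z)\, \eta_j(\rho(z))\ \text{on}\ q(U \cup Y),\qquad 0\ \text{elsewhere},
\]
\[
\tilde{\mu}_k(z) := (1 - \sigma(z))\, \mu_k(z)\ \text{on}\ \Omega_2,\qquad 0\ \text{elsewhere}.
\]
Continuity holds because $\sigma$ vanishes on the closed set $q(\phi^{-1}[2/3, 1])$ containing the topological boundary of $q(U \cup Y)$, and $1 - \sigma$ vanishes on $q(\phi^{-1}[0, 1/3] \cup Y) \supseteq Z \setminus \Omega_2$. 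Direct computation gives $\sum_j \hat{\eta}_j + \sum_k \tilde{\mu}_k = \sigma + (1 - \sigma) = 1$; local finiteness on $Z$ follows from that of $\{\eta_j\}$ on $Y$ and $\{\mu_k\}$ on $\Omega_2$, using that saturated open neighborhoods of points of $q(Y)$ contained in $\Omega_1$ are disjoint from every $\tilde{\mu}_k$.

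The hard part will be ensuring subordinateness for the $\hat{\eta}_j$: even though $\supp \eta_j \subset W_j \cap Y$, the formula $\sigma \cdot (\eta_j \circ \rho)$ may spread the support via $\rho^{-1}$ outside $W_j$ in $\Omega_1$, since fibers of $\rho$ are not small. I plan to remedy this by introducing a $j$-dependent cutoff $\sigma_j : X \to [0,1]$, built by Urysohn between the disjoint closed subsets $f^{-1}(\supp \eta_j) \cap A$ and $X \setminus (U \cap q^{-1}(W_j))$ of $X$, with $\sigma_j \equiv 1$ on the first and $\supp \sigma_j$ inside the complement of the second. Replacing $\sigma$ by $\sigma_j$ in the formula for $\hat{\eta}_j$ forces $\supp \hat{\eta}_j \subset W_j$; the coherence condition $\sum_j \sigma_j(x)\, \eta_j(f(r(x))) = \sigma(x)$ needed to preserve the sum-to-one property will be arranged by choosing the family $\{\sigma_j\}$ simultaneously over a common shrinking refinement of $\mathcal{W}$ and, if necessary, renormalizing by $\sigma / \sum_j \sigma_j\, \eta_j \circ f \circ r$ on the locus where the latter is positive.
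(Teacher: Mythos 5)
Your overall strategy is the paper's: pull a partition of unity on $Y$ back along a retraction of a neighbourhood of $q(Y)$, observe that the pullback is no longer subordinate to the cover because the fibres of the retraction are large, repair this with index-dependent Urysohn cutoffs (your $\sigma_j$ are the paper's $c_k$), and glue with a second partition of unity living on a paracompact piece of $q(X\setminus A)$. Up to and including the construction of the $\sigma_j$ your argument is sound and matches the paper's.

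The genuine gap is in the final step, which you flag but do not resolve, and your two proposed repairs both fail. Put $N:=\sum_j \sigma_j\cdot(\eta_j\circ\rho)$. The coherence condition $N=\sigma$ cannot be arranged: for $x\in U$ with $\sigma(x)>0$, every $j$ with $\eta_j(\rho(x))>0$ has $\rho(x)\in W_j$, but nothing forces $q(x)\in W_j$; since $\sigma_j$ must vanish outside $q^{-1}(W_j)$, one can have $N(x)=0$ while $\sigma(x)>0$ (already for $X=[0,1]$, $A=\{0\}$, $Y=\mathrm{pt}$ and a cover by short intervals). No shrinking of $\mathcal{W}$ helps, because $\{\sigma>0\}$ is a neighbourhood of $A$ determined by $\phi$ alone and knows nothing about the cover. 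For the same reason the renormalization by $\sigma/N$ is not continuous, and extending by zero across $\{N=0\}\cap\{\sigma>0\}$ destroys the sum-to-one property there. The paper's way out is to \emph{not} fix the second region in advance via $\phi$: with $\hat\eta_j:=\sigma_j\cdot(\eta_j\circ\rho)$ (equal to $\eta_j$ on $q(Y)$), one has $N\equiv 1$ on $q(Y)$, so $N^{-1}[0,2/3]$ is a closed subset of $q(X\setminus A)\cong X\setminus A$, hence a closed subset of a paracompact space and paracompact; one takes a partition of unity $\{\rho_l\}$ there subordinate to $\mathcal{W}$ and glues via a bump $b$ in $N$ (with $b=1$ where $N\geq 2/3$ and $b=0$ where $N\leq 1/3$), e.g. $\{b\hat\eta_j/N\}\cup\{(1-b)\rho_l\}$. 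Replacing your $\Omega_2$ by this level set --- and note that working with a closed set rather than the open set $\Omega_2$ also repairs the local finiteness of your family $\{\tilde\mu_k\}$, which as written can fail at points where $\phi=1/3$ --- completes the proof along the lines you set up.
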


\begin{lem}\label{lemmazwei}
Let $Y$ be the colimit of the sequence $Y_1 \subset Y_2 \subset
\ldots$. Assume that $Y_n$ is paracompact and closed in $Y$; assume
that $Y_n \subset Y_{n+1}$ is a neighborhood retract and assume that
$Y_{n+1} \setminus Y_n$ is paracompact and Hausdorff. Then $Y$ is
paracompact.
\end{lem}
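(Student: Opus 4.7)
The plan is to verify the definition of paracompactness directly: given any open cover $\cU=\{U_\alpha\}_{\alpha\in A}$ of $Y$, I will construct a subordinate locally finite partition of unity. My strategy is inductive along the filtration. For each $n$ I produce a partition of unity $\{\lambda^{(n)}_\alpha\}_{\alpha\in A}$ on $Y_n$, subordinate to $\cU|_{Y_n}$ and locally finite there, arranged so that $\lambda^{(n+1)}_\alpha|_{Y_n}=\lambda^{(n)}_\alpha$. Since each $Y_n$ is closed in $Y$ and $Y=\colim_n Y_n$, the common values will glue to continuous functions $\lambda_\alpha$ on $Y$. The base case $n=1$ is immediate from paracompactness of $Y_1$.

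For the inductive step, let $r_n\colon V_n\to Y_n$ be a retraction defined on an open neighborhood $V_n$ of $Y_n$ in $Y_{n+1}$, and choose a continuous cutoff $\psi_n\colon Y_{n+1}\to[0,1]$ with $\psi_n\equiv 1$ on some open $W_n$ satisfying $Y_n\subset W_n\subset V_n$, and $\psi_n\equiv 0$ outside $V_n$. By paracompactness of the open subspace $Y_{n+1}\setminus Y_n$, I pick a locally finite partition of unity $\{\mu_\alpha\}_{\alpha\in A}$ on it subordinate to $\cU|_{Y_{n+1}\setminus Y_n}$, and set
\[
\lambda^{(n+1)}_\alpha \;:=\; \psi_n\cdot(\lambda^{(n)}_\alpha\circ r_n) \;+\; (1-\psi_n)\cdot\mu_\alpha,
\]
with each summand extended by $0$ to $Y_{n+1}$; the second summand vanishes on the open set $W_n\supset Y_n$ and the first vanishes outside the open set $V_n$, so the pasting lemma gives continuity. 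A routine computation using $\sum_\alpha\lambda^{(n)}_\alpha\equiv 1$ on $Y_n$ and $\sum_\alpha\mu_\alpha\equiv 1$ on $Y_{n+1}\setminus Y_n$ then yields $\sum_\alpha\lambda^{(n+1)}_\alpha\equiv 1$, the compatibility $\lambda^{(n+1)}_\alpha|_{Y_n}=\lambda^{(n)}_\alpha$, the support condition $\supp\lambda^{(n+1)}_\alpha\subset U_\alpha\cap Y_{n+1}$, and local finiteness on $Y_{n+1}$.

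The main obstacle is local finiteness of the assembled family $\{\lambda_\alpha\}_{\alpha\in A}$ on $Y$ in the colimit topology, not merely on each $Y_n$. Given $y\in Y$, let $n_0$ be minimal with $y\in Y_{n_0}$, and choose an open neighborhood $N_{n_0}\subset Y_{n_0}\setminus Y_{n_0-1}$ of $y$ meeting the supports of only finitely many $\lambda^{(n_0)}_\alpha$, say those indexed by $F\subset A$. I recursively set $N_{n+1}:=r_n^{-1}(N_n)\cap W_n$ for $n\geq n_0$; this is open in $Y_{n+1}$, contains $y$, and satisfies $N_{n+1}\cap Y_n=N_n$. On $N_{n+1}\subset W_n$ the term $(1-\psi_n)\mu_\alpha$ vanishes, so $\lambda^{(n+1)}_\alpha=\lambda^{(n)}_\alpha\circ r_n$ there, and only $\alpha\in F$ can contribute. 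Iterating, $N_k\cap Y_m=N_m$ for $k\geq m\geq n_0$ and $N_k\cap Y_m=\emptyset$ for $m<n_0$, so $N:=\bigcup_{n\geq n_0}N_n$ satisfies that $N\cap Y_m$ is open in $Y_m$ for every $m$. Hence $N$ is open in $Y$, and only the finitely many $\lambda_\alpha$ with $\alpha\in F$ are nonzero on $N$. Continuity of each $\lambda_\alpha$ follows from continuity of each restriction via the colimit topology, completing the proof.
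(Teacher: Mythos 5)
You have a genuine gap in the inductive step, and it is precisely the pitfall the paper goes out of its way to flag. The term $\psi_n\cdot(\lambda^{(n)}_\alpha\circ r_n)$ need not be supported in $U_\alpha$: the retraction $r_n\colon V_n\to Y_n$ can carry points of $V_n$ that lie far outside $U_\alpha$ onto points of $\supp\lambda^{(n)}_\alpha\subset U_\alpha\cap Y_n$, i.e.\ $r_n^{-1}\bigl(\supp\lambda^{(n)}_\alpha\bigr)\not\subset U_\alpha$ in general. (Already for $Y_n$ a single point $y_0$ of $Y_{n+1}=\bR$ with $r_n$ the constant retraction, $\lambda^{(n)}_\alpha\circ r_n$ is constant on all of $V_n$, and the cover may contain only small sets around $y_0$.) The cutoff $\psi_n$ only confines the support to $V_n$, and no single choice of $V_n$ can work for all $\alpha$ simultaneously since the cover is arbitrary. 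So your $\{\lambda^{(n+1)}_\alpha\}$ is a locally finite partition of unity on $Y_{n+1}$ but is \emph{not} subordinate to $\cU|_{Y_{n+1}}$, and the induction hypothesis is not propagated. The paper states this explicitly in the proof of its Lemma A.2 (``$r^*\mu_k$ is a locally finite partition of unity on $B$, but it is not subordinate to $(U_i)_{i\in\cI}\cap B$'') and repairs it by an extra Urysohn function per index, supported in $U_\alpha$ and equal to $1$ where needed on $Y_n$; the same ``iterated application of Urysohn's lemma'' is what it invokes in its proof of the present lemma.

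The repair is not cosmetic for your architecture: once you multiply $\lambda^{(n)}_\alpha\circ r_n$ by a Urysohn function $c_\alpha$ with $\supp c_\alpha\subset U_\alpha$, the sum $\sum_\alpha c_\alpha\cdot(\lambda^{(n)}_\alpha\circ r_n)$ equals $1$ on $Y_n$ but only on $Y_n$, so your exact normalization $\sum_\alpha\lambda^{(n+1)}_\alpha\equiv 1$ and the strict compatibility $\lambda^{(n+1)}_\alpha|_{Y_n}=\lambda^{(n)}_\alpha$ --- on which your gluing and your local-finiteness argument via the sets $N_k$ rely --- are both lost. This is exactly why the paper abandons exact partitions of unity at each stage: it only extends each stage to a locally finite family on $Y$ subordinate to $\cU$ whose total sum is positive on $Y_n$, enforces local finiteness of the union by supporting the $(n+1)$-st family inside $\{\sum_{k\le n}\sum\mu^{(k)}\le\frac{1}{n+1}\}$, and normalizes once at the very end. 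Your colimit argument for openness of $N=\bigcup_k N_k$ and for local finiteness on $Y$ is correct and pleasantly explicit, and your construction of $\psi_n$ from the two-element cover $\{V_n,\,Y_{n+1}\setminus Y_n\}$ is fine; but without the per-index Urysohn correction (and the accompanying restructuring of the induction) the proof does not establish subordination, which is part of the definition of paracompactness in use here.
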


\begin{proof}[Proof of Lemma \ref{lemmaeins}]
Denote the quotient map by $q = q_X \coprod q_Y: X  \coprod Y \to X
\cup_A Y$. Let $B \subset X \cup_A Y$ be an open neighborhood of
$q(Y)$ with a retraction map by $r: B \to Y$. Let $(U_i)_{i \in
\cI}$ be an open covering of $X \cup_A Y$. Let $(\mu_{k})_{k \in
\cK}$ be a locally finite partition of unity on $Y$ which is
subordinate to $q_{Y}^{-1} (U_i)$. Clearly, $r^* \mu_k$ is a locally
finite partition of unity on $B$, but it is not subordinate to
$(U_i)_{i \in \cI} \cap B$. Because $X$ is paracompact and
Hausdorff, it is normal (\cite{Nag}, p. 94, 99) and therefore
Urysohn's lemma applies to it. Namely, we can find a function $c_k :
X  \to [0,1]$ such that $\supp(c_k) \subset q_{X}^{-1}(U_{i(k)})$
and such that $c_k (a) = 1$ if $a \in A$ and $\mu_k (f(a)) > 0$.
Define a function $\nu_k : X \cap_A Y \to [0,1]$ by $\nu_k = \mu_k$
on $q(Y)$ and $\nu_k = c_k r^* \mu_k$ on $q(X )$. Clearly, $\nu_k$
is a locally finite family of functions and the function $\nu=\sum
\nu_k $ is equal to $1$ on $q(Y)$.\\
On the other hand, the space $Z:=\nu^{-1} [0, \frac{2}{3}]$ is a
closed subspace of the paracompact Hausdorff space $X$ and therefore
also paracompact. Then take a partition of unity subordinate to $(Z
\cap U_i)_{i \in \cI}$ and use a bump function $b$ with $b=0$ if
$\nu \leq \frac {1}{3}$ and $b=1 $ if $\nu \geq \frac{2}{3}$ to glue
both partitions of unity together.
\end{proof}

\begin{proof}[Proof of Lemma \ref{lemmazwei}]
To simplify notation, we shall talk about locally finite families
of nonnegative functions without mentioning their individual
members. If $f$ is a locally finite family of nonnegative
functions, we denote the sum of its members by $\sum f$
and the support of $\sum f$ simply by $\supp (f)$.\\
Let $\cU$ be an open covering of $Y$. Any locally finite family $f$
of nonnegative functions on $Y_n$ which is subordinate to $\cU \cap
Y_n$ admits an extension to a locally finite family of nonnegative
functions on all of $Y$, subordinate to $\cU$. This follows from an
iterated application of Urysohn's lemma, using the retraction maps.\\
Nowstart with a partition of unity $\mu^{(1)}$ on $Y_1$ which is
subordinate to $\cU \cap Y_1$. Extend it as above to a locally
finite family on $Y$, also denoted $\mu^{(1)}$. Then choose a
subordinate locally finite family $\mu^{(2)}$ on $Y_2$ whose support
is contained in $\{ \sum \mu^{(1)} \leq \frac{1}{2}  \}$ and which
is equal to $1$ on $\{ \sum \mu^{(1)} \leq \frac{1}{4}  \}$. The sum
$\sum \mu^{(1)} + \sum \mu^{(2)}$ is a locally finite family of functions which is subordinate to $\cU$ and whose support contains $Y_2$.\\
We repeat this process: Assume that locally finite families of
nonnegative functions $\mu^{(1)}, \ldots \mu^{(n)}$ on $Y$ are
chosen, such that the support of $\sum_{k=1}^{n} \sum \mu^{(k)}$
contains $Y_n$. We can define a new locally finite family
$\mu^{(n+1)}$ with support contained in $\{ \sum_{k=1}^{n} \sum
\mu^{(k)} \leq \frac{1}{n+1} \}$ and whose sum is equal to $1$ on
$\{ \sum \mu^{(1)} + \ldots \mu^{(n)} \leq \frac{1}{2(n+1)} \}$.
These conditions guarantee that the union $\cup_{n=1}^{\infty}
\mu^{(n)}$ of these families of functions is locally finite and that
the union of the supports covers $Y$. An obvious formula produces a
partition of unity out of this family.
\end{proof}


\begin{thebibliography}{000000}
\bibitem{AS} M. Atiyah, G. Segal: \emph{Equivariant $K$-theory and completion}. J. Differential Geometry 3 (1969) p. 1-18.
\bibitem{BGNX} K. Behrend, G. Ginot, B. Noohi, P. Xu: \emph{String topology for stacks}, preprint arXiv:0712.3857.
\bibitem{EG} J. Ebert, J. Giansiracusa: \emph{Pontrjagin-Thom maps and the homology of the moduli stack of stable curves} arXiv:0712.0702.
\bibitem{EG2} J. Ebert, J. Giansiracusa: \emph{On the homotopy type of the Deligne-Mumford compactification}, Algebraic \& Geometric Topology 8 (2008) 2049-2062.
\bibitem{FHT} D. Freed, M. Hopkins, C. Teleman: \emph{Loop groups and twisted K-theory I}, preprint, arXiv 0711.1906 (2007).
\bibitem{GH} D. Gepner, A. Henriques: \emph{Homotopy theory of orbispaces}, preprint, arXiv 0701916 (2007).
\bibitem{Haef} A. Haefliger: \emph{Homotopy and integrability}, Manifolds - Amsterdam 1970, Springer Lecture Notes in Mathematics 197 (1971), p. 133-163.
\bibitem{Hein} J. Heinloth: \emph{Some notes on differentiable stacks}, Mathematisches Institut, Seminars 2004/05, Universit\"at G\"ottingen (2005), p. 1-32.
\bibitem{Mil} J. Milnor: \emph{Construction of universal bundles II}, Ann. of Math. 63 (1956), p. 430-436.
\bibitem{Nag} J. Nagata: \emph{Modern general topology}. Second edition. North-Holland Mathematical Library, 33. North-Holland Publishing Co., Amsterdam, (1985).
\bibitem{Noo1} B. Noohi: \emph{Foundations of topological stacks I}, preprint, arXiv 0503247 (2005).
\bibitem{Noo2} B. Noohi: \emph{Homotopy types of stacks}, preprint, arXiv:0808.3799
\bibitem{seg} G. Segal: \emph{Classifying spaces and spectral sequences}.
Inst. Hautes \'Etudes Sci. Publ. Math. IHES, 34 (1968), p. 105-112.
\bibitem{segcat} G. Segal: \emph{Categories and cohomology theories}.
Topology 13 (1974), p. 293-312.
\bibitem{Wahl} N. Wahl: \emph{Infinite loop space structure(s) on the stable mapping class group}, Topology 43 (2004), p. 343-368.
\end{thebibliography}
\end{document}